\documentclass[11pt]{article}
\usepackage{fullpage}
\usepackage[psamsfonts]{eucal}
\usepackage{amsmath}
\usepackage{amsfonts}
\usepackage{amssymb}
\usepackage{amstext}
\usepackage[symbol]{footmisc}
\usepackage[noadjust]{cite}
\usepackage{amscd}
\usepackage{amsthm}
\usepackage{makeidx}
\usepackage{tikz}
\usepackage{graphicx}
\usepackage[colorlinks=true,linkcolor=blue]{hyperref}
\usetikzlibrary{calc}

\usepackage{supertabular}
\usepackage{enumitem}
\usepackage{titling}
\usepackage{wasysym}

\usepackage{comment}
\usepackage{cleveref}
\usepackage{listings}
\usepackage{relsize}

\newcommand{\RR}{\mathbb{R}}

\newcommand{\PP}{\mathbb{P}}

\newcommand{\Tr}{\mathrm{Tr}}

\usepackage{microtype}

\theoremstyle{plain}
\newtheorem{thm}{Theorem}[section]

\newtheorem{prop}[thm]{Proposition}
\newtheorem{lemma}[thm]{Lemma}

\newtheorem{dfn}[thm]{Definition}

\newtheorem{rmk}[thm]{Remark}
\newtheorem{cor}[thm]{Corollary}

\newcommand{\Var}{\textnormal{Var}}

\def\title#1{
    \thispagestyle{plain}
    \vspace*{-14pt}
    \vskip 39pt
    {\centering{\titlefont #1\par}}
    \vspace*{4pt}
    \vskip 1em
}
\def\author#1{\par
    {\centering{\authorfont#1}\par\vspace*{0.05in}}
}
\def\address#1{\par
    {\centering{\affiliationfont#1\par}}\par\vspace*{11pt}
}

\def\titlefont{\fontsize{14}{21}\boldmath\selectfont\centering{}}
\def\authorfont{\fontsize{13}{15}}

\let\affiliationfont\rhfont

\definecolor{forestgreen}{RGB}{34, 139, 34}

\newcommand{\bR}{\mathbb{R}}
\newcommand{\bC}{\mathbb{C}}

\newcommand{\deq}{\mathrel{\mathop:}=}

\newcommand{\bP}{\mathbb{P}}

\newcommand{\bE}{\mathbb{E}}

\newcommand{\be}{\begin{equation}}
\newcommand{\ee}{\end{equation}}

\numberwithin{equation}{section}

\newcommand{\rd}{{\rm d}}

\hypersetup{pdftitle={A lower bound for non-backtracking eigenvalues from a Galton-Watson local limit}}
\allowdisplaybreaks[1]

\def\titlefont{\fontsize{15}{17}\bfseries\boldmath\selectfont\centering{}}
\def\authorfont{\fontsize{13}{15}}

\let\affiliationfont\rhfont

\def\address#1{\par
    {\centering{\affiliationfont#1\par}}\par\vspace*{11pt}
}

\def\body{
\setcounter{footnote}{0}
\def\thefootnote{\alph{footnote}}
\def\@makefnmark{{$^{\rm \@thefnmark}$}}
}

\def\title#1{
    \thispagestyle{plain}
    \vspace*{-14pt}
    \vskip 79pt
    {\centering{\titlefont #1\par}}
    \vskip 1em
}

\begin{document}

\title{An Alon-Boppana Bound for the Non-Backtracking Operator}
\begin{center}

\begin{minipage}[c]{0.2\textwidth}
 \author{Theo McKenzie}
\address{Yale University\\
   theo.mckenzie@yale.edu }
 \end{minipage}
 \end{center}

\begin{abstract}
For any fixed \(k\), we prove a lower bound on the \(k\)th largest modulus of an eigenvalue of the non-backtracking matrix \(B\). Specifically, consider any deterministic or random family of graphs that converges locally to the unimodular Galton–Watson tree with root degree distribution \(D\), and set \(\kappa:=\mathbb E[D(D-1)]/\mathbb E[D]\). Given $\kappa>1$ and an exponential-moment bound on the empirical degree distributions, we show that \(|\lambda_k(B)|\geq\sqrt{\kappa}-o_N(1)\), where \(N\) is the number of vertices. When restricted to locally tree-like regular graphs, this recovers a well-known consequence of the Ihara-Bass formula. In the specific case where the graph is generated through the Erdős–Rényi model with expected degree \(d>1\), this proves a conjecture of Bordenave, Lelarge, and Massoulié.

 To do this, we show that the normalized log-determinant of the Bethe-Hessian of the graph is bounded by that of the Bethe-Hessian of its local limit. This bound is violated if the eigenvalues of the non-backtracking matrix are too small. We establish this using an effective-conductance
interpretation of the tree Green's function recursion.

\end{abstract}
\section{Introduction}
The non-backtracking matrix is a useful tool in community detection
\cite{krzakala2013spectral,saade2014spectral,bordenave2018nonbacktracking},
random matrix theory \cite{sodin2007random}, the analysis of mixing
times \cite{lubetzky2016cutoff,alon2007non}, and the study of adjacency
spectra \cite{angel2015non,bordenave2020new}. It is indexed by the
directed edges of a graph and records which edges can be traversed
consecutively, subject to the restriction that a walk cannot
immediately retrace an edge. The definition is as follows. Let $G$ be a (possibly randomly sampled) graph on $N$ vertices. Its non-backtracking matrix $B_G$ is indexed by oriented
edges and has entries
\be\label{eq:nbw}
 (B_G)_{(u,v),(x,y)}
 =\mathbf 1_{\{v=x\}}\mathbf 1_{\{y\ne u\}}.
\ee

 Perhaps surprisingly, the spectral behavior of this directed graph can be quite different from that of the original adjacency operator.  One way to analyze the spectrum of the non-backtracking operator is through the Ihara-Bass identity \cite{hashimoto1989zeta,bass1992ihara}. Write $A_G$ for the adjacency matrix of $G$, and $D_G$ for its diagonal matrix of degrees. For $r\in \bC$, we write write the \emph{Bethe-Hessian}
\begin{equation}\label{eq:BH}
 H(r,G)=D_G-I-rA_G+r^2I.
\end{equation}
Define $N$ to be the number of vertices of $G$, and $|E_N|$ to be the number of edges. The Ihara-Bass identity  is
\begin{equation}\label{eq:ihara}
 \det(rI-B_G)=(r^2-1)^{|E_N|-N}\det H(r,G).
\end{equation}

In the case where $G$ is a regular graph, $D_G$ is a multiple of the identity. Therefore, \eqref{eq:ihara} gives that, in this regular case, besides the trivial $|E_N|-N$ eigenvalues at $\pm1$, eigenvalues $\lambda$ of the non-backtracking matrix are of the form 
\be \label{eq:reggraph}
\lambda=\frac{\mu\pm \sqrt{\mu^2-4(d-1)}}{2},
\ee
for some eigenvalue $\mu$ of $A_G$. 

For non-regular graphs, $D_G$ and $A_G$ need not commute, so the
Ihara-Bass identity does not yield a formula such as
\eqref{eq:reggraph} in terms of the adjacency eigenvalues alone.
Moreover, since $B_G$ is non-Hermitian, the usual arguments relating
local weak convergence to convergence of Hermitian spectral measures
do not apply directly.

A natural first question about the non-backtracking spectrum is whether it satisfies
an analogue of the \emph{Alon-Boppana bound} for adjacency matrices. For fixed $d$ and $k\ge1$, this bound
states that the $k$th largest adjacency eigenvalue of an $N$-vertex
$d$-regular graph is at least $2\sqrt{d-1}-o_N(1)$ \cite{alon1986eigenvalues}.
The bound has received numerous strengthenings, as well as extensions to non-regular graphs
\cite{nilli1991second,hoory2005lower,jiang2019spectral}.
For the non-backtracking matrix of a \emph{regular} graph, a corresponding
lower bound follows directly from \eqref{eq:reggraph}: the two roots
associated with each adjacency eigenvalue have product $d-1$,
so at least one has modulus at least $\sqrt{d-1}$.

An analogous lower bound for non-regular graphs was conjectured by
Bordenave, Lelarge, and Massouli\'e
\cite{bordenave2018nonbacktracking}. For an Erd\H{o}s-R\'enyi graph
$G\sim G(N,\alpha/N)$ with fixed $\alpha>1$, they proved that
$|\lambda_2(B_G)|\le\sqrt{\alpha}+o_N(1)$ with high probability
and predicted a matching lower bound.
This question arose from their analysis of community detection
in stochastic block models with common asymptotic expected degree
$\alpha$. Under their hypotheses, the Perron eigenvalue and
informative outliers are separated from the remaining spectrum,
where eigenvalues have modulus at most $\sqrt{\alpha}+o_N(1)$. They also give a matching lower bound on the $k$th largest singular value. 
Such a lower bound on the eigenvalue would identify $\sqrt{\alpha}$ as the
actual spectral edge underlying this separation. Further evidence of a relationship was given when Anantharaman obtained comparisons between the mixing rates of the simple random walk and the non-backtracking walk, assuming degrees are bounded above and are at least 3 \cite{anantharaman2017some}. 

In this work, we confirm this conjecture by giving a result that generalizes the conjecture and the known bound resulting from \eqref{eq:reggraph} into a single result. Specifically, we show for a family of graphs that converges to a Galton-Watson limit and any fixed $k$, the $k$th largest modulus of an eigenvalue is bounded from below by the square root of the rate of expansion of the tree (see \Cref{thm:main}).

For the adjacency operator, this type of bound is proven by considering a family of test functions around the local neighborhood of a vertex, and showing these converge to the spectral radius of the local limit. For example, this leads to the bound $2\sqrt{d-1}$ in the regular case, as this is the spectral radius of the infinite
$d$-regular tree. Such a method is not effective for the non-backtracking matrix. For example, if we pass to local neighborhoods that are trees, the non-backtracking matrix is nilpotent.

The goal instead is to use the Bethe-Hessian to control the spectrum. In order to get full spectral information about our graph, we would typically need to obtain information about $H(r,G)$ for complex $r$, which complicates the problem as it makes $H(r,G)$ non-Hermitian. However, we show that by passing to the log-determinant for a single, well-chosen $r$, the contribution of eigenvalues with small absolute value is explicit. We then compare this to the log-determinant of the tree limit, which we can analyze sharply. If the eigenvalues of $B_G$ are too small, then we violate the convergence of the log-determinant guaranteed by weak convergence.

 Given this goal, the majority of the remaining analysis is to prove the law of the log-determinant of the local tree limit. To do this, we use the classical correspondence between electrical
networks and Green functions: for a network Laplacian with Dirichlet
boundary conditions, the diagonal Green function equals the effective
resistance to the grounded boundary, and its reciprocal is the effective
conductance 
\cite{Keller2017criticality}. This correspondence has also been
used to obtain spectral information. In particular, in \cite{Lyons2010identities} the identification of the diagonal resolvent with effective resistance gives an expression of the log-determinant.

We connect these observations to the present problem by identifying
our zero-energy Bethe-Hessian cavity variable with a normalized
effective conductance. The cavity recursion then becomes the
series-parallel reduction rule for the weighted tree.
This identification allows us to establish positivity of the cavity
variable by constructing a unit flow of finite energy. To complete this analysis, we separate the log-determinant into vertex and edge components, similar to analysis done to find the spectrum of Jacobi matrices on trees through the log-determinant \cite{banks2024useful}.
\paragraph{Acknowledgement}
The question of finding a lower bound on the second largest eigenvalue of the nonbacktracking spectrum in the Erd\H{o}s-R\'{e}nyi model was first presented to the author by Charles Bordenave at the Centre International de Recontres Math\'{e}matiques (CIRM) in Marseille, France,  at the program ``Random Hyperbolic Surfaces and Random Graphs''.  

The author used OpenAI's GPT-6 in developing the proof. The author asked the model for ideas to prove the main theorem for the Erd\H{o}s-R\'{e}nyi graph using the Bethe-Hessian, in response to which it came up with the fundamental idea of using the log-determinant to pass to a Hermitian matrix and make the contribution of small eigenvalues fixed. After further prompting, it created much of the analysis used to control the log-determinant, including interpreting the resolvent as a conductance and estimating the escape probability. It was also used to draft and edit certain arguments.

\section{Statement of the result}

Let $D$ be a nonnegative integer-valued random variable with
$0< m :=\bE D<\infty$. $D$ defines the distribution of the degree of a randomly selected vertex, but most of our analysis deals with the expansion at later levels from this root. For example, for a regular tree, the root has degree $d$, but the size of a ball scales at rate $(d-1)$.  Therefore, define the forward offspring distribution (see \cite[Section 2.2]{bordenave2010resolvent}) as
\begin{equation}\label{eq:sizebiased-law}
 \PP(K=k)=\frac{(k+1)\PP(D=k+1)}{ m },\qquad k\ge0,
 \qquad
 \kappa:=\bE K=\frac{\bE[D(D-1)]}{ m }.
\end{equation}
Let $(T,o)$ be the random rooted tree whose root has $D$ children and whose
other vertices independently have $K$ children. The root degree and all
these offspring variables are independent. This is the unimodular
Galton-Watson tree associated with $D$.

Next, we give our condition on the graph family.
\begin{dfn}[Graph convergence condition]

For each \(L\geq 1\), call \(f_L(G,v)\in\RR\) an $L$-local function if it is a function of the distance $L$ neighborhood of $v$ in the rooted graph $(G,v)$. By \emph{empirical
local convergence in probability} of the family of graphs $\{G_N\}$ to $(T,o)$, we mean the following. For every fixed $L\geq 0$ and $L$-local function \(f_L(G,v)\), such that $\frac1N\sum_{v\in V(G_N)} |f_L(G_N,v)|^2=O_{\bP}(1)$, we have

\begin{equation}\label{eq:local}
 \frac1{N}\sum_{v\in V(G_N)}f_L(G_N,v)
 \xrightarrow{\PP}\bE f_L(T,o).
\end{equation}

\end{dfn}
Note this is a condition on the empirical neighborhood distribution, not merely
on the neighborhood of a random vertex after averaging over $G_N$. 

Throughout, we define $B_N$ to be the nonbacktracking matrix of $G_N$.
We order the eigenvalues of the non-backtracking matrix by nonincreasing
modulus, counting algebraic multiplicity. We then have 
\[
 \lambda_1(B_N)\ge|\lambda_2(B_N)|\ge|\lambda_3(B_N)|\ge\cdots.
\]
When the matrix has fewer than $k$ eigenvalues, set $\lambda_k(B_N)=0$.

The following is the main result of this work, which gives an Alon-Boppana type bound based on the local limit. 
\begin{thm}\label{thm:main}
For each $N$, let $G_N$ be a random graph on $N$ vertices, and suppose
that the sequence $\{G_N\}$ satisfies \eqref{eq:local} with $\kappa>1$. Assume that, for some
$a>0$,
\begin{equation}\label{eq:exponential}
 \frac1N\sum_{v\in V(G_N)}e^{a\deg(v)}=O_{\PP}(1).
\end{equation}
Then, for any fixed index $k\geq 1$ and error term $\varepsilon>0$,
\begin{equation}\label{eq:main}
 \PP\bigl(|\lambda_k(B_{N})|\le\sqrt{\kappa}-\varepsilon\bigr)
 \longrightarrow 0.
\end{equation}
In particular, if $G_N$ is deterministic, satisfies empirical local
convergence as in \eqref{eq:local}, and obeys
\[
 \sup_N\frac1N\sum_{v\in V(G_N)}e^{a\deg(v)}<\infty
\]
for some $a>0$, then
\begin{equation}\label{eq:deterministic}
 \liminf_{N\to\infty}|\lambda_k(B_{N})|\ge\sqrt{\kappa}.
\end{equation}
\end{thm}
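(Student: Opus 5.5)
The approach is to compare two computations of $\frac1N\log|\det H(r,G_N)|$ at a single real point $r\in(1,\sqrt\kappa)$: one obtained from the Ihara--Bass identity \eqref{eq:ihara} under the hypothesis that the non-backtracking spectrum is small, and one obtained from the local limit $(T,o)$. Fix $\varepsilon>0$ and set $\rho=\sqrt\kappa-\varepsilon$. Pick $r\in(\max(1,\rho),\sqrt\kappa)$ and work on the event $\mathcal E_N=\{|\lambda_k(B_N)|\le\rho\}$. I will show that on $\mathcal E_N$ the normalized log-determinant is forced to be close to an explicit ``trivial'' value. I will then show that the tree value lies strictly below it, and that the graph value cannot exceed the tree value by more than $o_{\PP}(1)$. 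Together these force $\PP(\mathcal E_N)\to0$; the deterministic statement \eqref{eq:deterministic} follows immediately.

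\emph{Step 1 (small spectrum pins the log-determinant).} Put $M=|E_N|$. Since $B_N$ is $2M\times 2M$, we have $\det(zI-B_N)=z^{2M}\det(I-B_N/z)$. Define
\[
f_N(z)=\frac1N\log\bigl|\det(I-B_N/z)\bigr|=\frac1N\sum_i\log|1-\lambda_i/z|.
\]
On $\mathcal E_N$, all but at most $k-1$ eigenvalues satisfy $|\lambda_i|\le\rho$. I remove the outliers $\lambda_1,\dots,\lambda_{k-1}$ and keep
\[
g_N(z)=\frac1N\sum_{i\ge k}\log|1-\lambda_i/z|,
\]
which is harmonic in $|z|>\rho$ and vanishes at $\infty$. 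Each outlier satisfies $|\lambda_i|\le\max\deg-1$, and \eqref{eq:exponential} gives $\max\deg=O_{\PP}(\log N)$. Hence the outlier terms contribute $O(k\log\log N/N)$, except near $z=\lambda_i$; I avoid those points by moving $r$ slightly, or by averaging over $r$ in a small interval.

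The Taylor coefficients of $g_N$ at $\infty$ are, up to a negligible correction from the outliers,
\[
-\frac{1}{nN}\operatorname{Re}\bigl(\Tr B_N^n\, z^{-n}\bigr).
\]
The quantity $\Tr B_N^n$ counts closed non-backtracking walks of length $n$. By \eqref{eq:local}, $\frac1N\Tr B_N^n\to0$ for each fixed $n$, since the indicator that $v$ lies on a cycle of length $\le n$ is $n$-local and vanishes on $T$. A uniform bound $|g_N|\le C$ on $|z|=r'$ with $\rho<r'<r$ is available because $\frac1N\log\det$ is controlled by $\frac{M}{N}\log(1+\max\deg/r')$. Combined with Harnack and normal-family compactness, this yields $g_N(r)\to0$ in probability on $\mathcal E_N$. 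Feeding this into \eqref{eq:ihara} gives, on $\mathcal E_N$,
\[
\frac1N\log|\det H(r,G_N)|=\frac{2M}{N}\log r-\frac{M-N}{N}\log(r^2-1)+o_{\PP}(1),
\]
and $\frac{2M}{N}\to m$ by \eqref{eq:local}.

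\emph{Step 2 (tree side).} For $1<r<\sqrt\kappa$, I would define the tree Bethe--Hessian log-determinant per vertex, $\mathcal L(r)$. I would decompose it into a vertex part and an edge part, in the spirit of \cite{banks2024useful}, using the cavity variables $\gamma_v$ given by the recursion $\gamma_v=\deg(v)-1+r^2-r^2\sum_{c}\gamma_c^{-1}$ over children $c$. I would then prove:
\begin{enumerate}[label=(\roman*)]
\item the upper bound $\limsup\frac1N\log|\det H(r,G_N)|\le\mathcal L(r)$ in probability;
\item the strict inequality $\mathcal L(r)<\frac m2\log r-\bigl(\frac m2-1\bigr)\log(r^2-1)$ for $r<\sqrt\kappa$.
\end{enumerate}
For (i), $H(r,G_N)$ is Hermitian, so I would write $\log|\det|$ through the spectral measure of $H$. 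Applying \eqref{eq:local} to the truncated resolvent quantities, and using upper semicontinuity of $\int\log|x|$, gives the $\limsup$ bound, with \eqref{eq:exponential} controlling the large-$x$ tail.

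For (ii), I would interpret the zero-energy cavity variable as a normalized effective conductance on the tree with edge weights growing like $r^{-2\,\mathrm{depth}}$. Since $\kappa/r^2>1$, the Galton--Watson tree has branching number exceeding the weight decay. A unit flow of finite energy, spreading mass uniformly over Galton--Watson descendants, shows the conductance is almost surely positive. This strictly lowers the edge contribution compared with the ``trivial'' cavity solution, which corresponds to the regular-graph identity. Comparing Steps 1 and 2 gives a contradiction on $\mathcal E_N$.

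The main obstacle is Step 2(ii): identifying the correct solution of the random cavity fixed point and proving the strict gap. I would first verify the $d$-regular case, where everything is explicit and the gap vanishes exactly at $r=\sqrt{d-1}$. I would then use the conductance/flow argument together with Jensen-type convexity in $\log$ to transfer that gap to the Galton--Watson tree with $\kappa>r^2$.
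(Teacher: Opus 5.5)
Your overall architecture matches the paper's: Ihara--Bass pinning of $\frac1N\log|\det H(r,G_N)|$ on the small-spectrum event, an upper bound by $\int\log x\,\nu_r(dx)$ from weak convergence, and positivity of the cavity variable via a finite-energy flow. The decisive step, Step 2(ii), has a genuine gap: you never give a mechanism that turns $\mathbb P(X>0)>0$ into the strict inequality.

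A nonzero cavity variable does lower the edge term $-\frac m2\mathbb E\log\bigl(1-\frac{(1-X)(1-Y)}{r^2}\bigr)$. It also raises the vertex term $\mathbb E\log\bigl(r^2-1+\sum_{i=1}^D X_i\bigr)$. Both $X\equiv0$ and the maximal solution are fixed points of the recursion, hence stationary points of this Bethe functional, so the net sign is not a first-order question. It genuinely depends on $r^2$ versus $\kappa$. Already for constant $X=x$ on the $d$-regular tree, the functional has derivative $0$ and second derivative $d(r^2+1-d)/(r^2-1)^2$ at $x=0$, which changes sign at $r^2=d-1$.

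Your proposed transfer from the regular case ``via Jensen-type convexity in $\log$'' does not work as stated. Jensen bounds the vertex term from above. But $\log\bigl(r^2-(1-x)(1-y)\bigr)$ is concave in each variable, so Jensen bounds the subtracted edge term in the wrong direction. There is also no coupling of the Galton--Watson tree with a regular tree to compare against.

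Two ingredients are missing. The first is the explicit identity
\[
\int\log x\,\nu_r(dx)=\mathbb E\log\Bigl(r^2-1+\sum_{i=1}^D X_i\Bigr)-\frac m2\,\mathbb E\log\Bigl(1-\frac{(1-X)(1-Y)}{r^2}\Bigr),
\]
with $X$ the maximal fixed point (the $\eta\downarrow0$ limit). The paper proves it by showing that the $\eta$-regularized right side has $\eta$-derivative equal to the expected root Green function $\mathbb E\langle\delta_o,(H(r)+\eta I)^{-1}\delta_o\rangle$. This uses size-biasing plus an exact cancellation relying on the recursion, followed by matching as $\eta\to\infty$.

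The second is the interpolation $X\mapsto tX$. Let $\Phi(t)$ be the right side above with $X,Y$ replaced by $tX,tY$. Then $\Phi(0)=m\log r-(\frac m2-1)\log(r^2-1)$ and $\Phi(1)$ is the tree value. After size-biasing and substituting $\sum_{i=1}^K X_i\overset{d}{=}r^2Y/(1-Y)$ from the fixed-point equation,
\[
\frac{\Phi'(t)}{m}=\mathbb E\Bigl[\frac{X}{r^2-1+tX+tr^2Y/(1-Y)}-\frac{X}{r^2-1+tX+tr^2Y/(1-tY)}\Bigr],
\]
which is $\le0$ and strictly negative on $\{X>0,\,Y>0\}$. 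The fixed-point equation enters essentially here, and this is what your sketch lacks.

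Smaller points:
\begin{itemize}
\item In (ii) the right side should be $m\log r-(\frac m2-1)\log(r^2-1)$ to match Step 1; as written it is off by $\frac m2\log r$.
\item The conductance is positive only with positive probability, since it vanishes on extinction. That is all you need.
\item The flow should be the one proportional to the martingale limits $W_v$. An equal split among children can have infinite energy for $r^2$ close to $\kappa$.
\item ``Moving $r$ slightly'' around the $k-1$ outliers needs uniformity in $r$. The paper avoids this by fixing $r_1>\dots>r_k$ and noting that $k-1$ outliers cannot be $\delta$-close to all of them.
\end{itemize}
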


The theorem applies in particular to sparse stochastic block models
with equal asymptotic expected degrees, answering a question of \cite{bordenave2018nonbacktracking} about Erd\H{o}s-R\'{e}nyi and, more generally, the stochastic block model. 

\begin{cor}\label{cor:sbm}
Partition $N$ vertices into a fixed number $q$ of blocks whose
proportions converge to $\pi_1,\ldots,\pi_q>0$. Let $W$ be a fixed
symmetric matrix with nonnegative entries, and form $G_N$ by joining
each pair of distinct vertices in blocks $i$ and $j$ independently
with probability $W_{ij}/N$. Suppose that
\[
 \sum_{j=1}^q W_{ij}\pi_j=\alpha>1
 \qquad \forall i\in\{1,\ldots,q\}.
\]
For any fixed $k\geq 1$, with high probability,
\be\label{eq:sbm}
 |\lambda_k(B_{N})|\geq\sqrt{\alpha}-o_N(1).
\ee

\end{cor}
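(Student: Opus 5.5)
The plan is to deduce \Cref{cor:sbm} directly from \Cref{thm:main}. We must show three things: the sequence $\{G_N\}$ satisfies the empirical local convergence \eqref{eq:local} towards the unimodular Galton--Watson tree with $D\sim\mathrm{Poisson}(\alpha)$; this limit has $\kappa=\alpha$; and the exponential moment condition \eqref{eq:exponential} holds. The value of $\kappa$ is immediate. For $D\sim\mathrm{Poisson}(\alpha)$ we have $\bE[D(D-1)]=\alpha^2$ and $m=\alpha$, so $\kappa=\alpha>1$. Moreover, the size-biased law \eqref{eq:sizebiased-law} is again $\mathrm{Poisson}(\alpha)$, so $(T,o)$ is simply the Poisson($\alpha$) Galton--Watson tree.

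For \eqref{eq:exponential} we use Markov's inequality. A vertex in block $i$ has degree equal to a sum of independent Bernoulli variables with total mean $\sum_j W_{ij}\,|V_j|/N+O(1/N)\le C$. Hence
\[
 \bE e^{a\deg(v)}\le \prod_{u\ne v}\bigl(1+p_{uv}(e^a-1)\bigr)\le \exp\bigl(C(e^a-1)\bigr)
\]
uniformly in $v$ and $N$. It follows that $\bE\bigl[\frac1N\sum_v e^{a\deg(v)}\bigr]$ is bounded, which gives $O_{\PP}(1)$.

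The main work is the local convergence \eqref{eq:local}, which we prove in two stages. First, we treat bounded $L$-local functions, or equivalently indicators $f=\mathbf 1\{B_L(G,v)\cong t\}$ for a fixed finite rooted graph $t$. We explore the $L$-ball of a vertex $v$ in block $i$ by breadth-first search. Each explored vertex of block $j'$ has a number of new neighbours in block $j$ that is approximately $\mathrm{Poisson}(W_{j'j}\pi_j)$. Summing over $j$, the total offspring count is $\mathrm{Poisson}(\sum_j W_{j'j}\pi_j)=\mathrm{Poisson}(\alpha)$ for every type $j'$. Therefore, once types are forgotten, the multitype branching limit is the single-type Poisson($\alpha$) tree, and this is exactly where the equal-degree hypothesis enters. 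The usual coupling gives $\PP(B_L(G_N,v)\cong t)\to\PP(B_L(T,o)\cong t)$, with an $O(1/N)$ cost for cycles and collisions within a ball of bounded size. For concentration, we run the same coupling for two vertices $v,w$ simultaneously. Their balls are disjoint and asymptotically independent with probability $1-o(1)$. Hence the variance of $\frac1N\sum_v f(G_N,v)$ tends to $0$, and Chebyshev's inequality gives convergence in probability.

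Second, we pass to unbounded $f$ satisfying $\frac1N\sum_v|f|^2=O_{\PP}(1)$, and we expect this to be the most delicate step. Split $f=f\mathbf 1_{A_M}+f\mathbf 1_{A_M^c}$, where $A_M$ is the event that the $L$-ball has at most $M$ vertices and $|f|\le M$. Since there are only finitely many rooted graphs with at most $M$ vertices, the first part is a finite combination of indicators and is covered by the first stage. For the second part, Cauchy--Schwarz bounds the average by $\bigl(\frac1N\sum_v|f|^2\bigr)^{1/2}\bigl(\frac1N\sum_v\mathbf 1_{A_M^c}\bigr)^{1/2}$. The first factor is $O_{\PP}(1)$. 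For the second factor, the fraction of vertices with balls larger than $M$ is small uniformly in $N$, by the exponential tails from the branching comparison. The fraction with $|f|>M$ and ball size at most $M$ is at most $M^{-2}\frac1N\sum_v|f|^2$. On the tree side, Fatou's lemma applied to truncations shows that $\bE|f(T,o)|^2<\infty$, so the truncation error there also vanishes as $M\to\infty$.

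With all three ingredients in place, \Cref{thm:main} with $\kappa=\alpha$ gives \eqref{eq:sbm}.
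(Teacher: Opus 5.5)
Your proposal is correct and follows the same route as the paper. You reduce to Theorem~\ref{thm:main}, note that forgetting types in the multitype Poisson tree gives the Poisson$(\alpha)$ Galton--Watson tree (whose size-biased offspring law is again Poisson$(\alpha)$, so $\kappa=\alpha$), and verify \eqref{eq:exponential} by the product bound over independent edges plus Markov's inequality, exactly as the paper does. The only difference is that the paper simply cites \cite[Proposition 36]{bordenave2018nonbacktracking} for the empirical local convergence, whereas you sketch it yourself via a breadth-first-search coupling and a two-vertex second-moment argument. You also supply the truncation, Cauchy--Schwarz and Fatou step that upgrades convergence for indicators to the $L^2$-controlled local functions required by \eqref{eq:local}, which the paper leaves implicit.
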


\begin{proof}[Proof of \Cref{cor:sbm}]
The empirical local limit of $G_N$ is a multitype Galton-Watson
tree in which a vertex of type $i$ has independent
$\operatorname{Pois}(W_{ij}\pi_j)$ numbers of children of type $j$;
see \cite[Proposition 36]{bordenave2018nonbacktracking}.
Since $\sum_j W_{ij}\pi_j=\alpha$ for every $i$, forgetting the types
gives a Galton-Watson tree with $\operatorname{Pois}(\alpha)$
offspring. Thus \eqref{eq:local} holds with $\kappa=\alpha$.

To verify \eqref{eq:exponential}, independence of the edge indicators
gives, for every fixed $a>0$ and every vertex $v$,
\[
 \bE e^{a\deg(v)}
 \le
 \left(1+\frac{e^a-1}{N}\max_{i,j}W_{ij}\right)^{N-1}
 \le
 \exp\left((e^a-1)\max_{i,j}W_{ij}\right).
\]
Averaging over $v$ and applying Markov's inequality yields
\[
 \frac1N\sum_{v\in V(G_N)}e^{a\deg(v)}=O_{\PP}(1).
\]
The conclusion follows from \Cref{thm:main}.
\end{proof}
\begin{rmk}
Assume additionally that the mean offspring matrix
$M=(W_{ij}\pi_j)_{i,j=1}^{q}$ is positively regular, meaning that
some power of $M$ has strictly positive entries, and that the block
proportions converge at rate $O(N^{-\gamma})$ for some $\gamma>0$.
Let $r_0$ be the number of eigenvalues of $M$ with modulus strictly
greater than $\sqrt{\alpha}$, counting algebraic multiplicity.
Combined with \cite[Theorem~4]{bordenave2018nonbacktracking},
\eqref{eq:sbm} then gives
\[
 |\lambda_k(B_N)|\xrightarrow{\PP}\sqrt{\alpha}
 \qquad\text{for every fixed }k>r_0.
\]
\end{rmk}
\begin{rmk}
    Unlike in the regular case, the limiting degree distribution by itself does not imply a nontrivial bound. To see this let $G_N$ be a disjoint union of copies of the star $K_{1,s}$, with
$s>3$ fixed. Its limiting degree law is
\[
 \PP(D=1)=\frac{s}{s+1},\qquad
 \PP(D=s)=\frac1{s+1},
 \qquad
 \frac{\bE[D(D-1)]}{\bE D}=\frac{s-1}{2}>1.
\]
Nevertheless every component is a finite tree, so $B_N$ is nilpotent. Its local limit is a rooted finite star,
not the unimodular Galton-Watson tree specified in the theorem.
\end{rmk}

\section{Consequences of local convergence and the degree bound}

\subsection{A separated evaluation point}

Our first goal will be to show that if the $k$th largest modulus of an eigenvalue is very small, then the log determinant must converge to a specific limit.
We start with some standard bounds on sparse random graphs. 
\begin{lemma}\label{lem:degrees}
With $\Delta_N$ the maximum degree,
\begin{equation}\label{eq:degree-consequences}
\Delta_N=O_{\PP}(\log N),
 \qquad \frac{\rho(B_N)^p}{N}\xrightarrow{\PP}0
 \quad\text{for each fixed }p.
\end{equation}
\end{lemma}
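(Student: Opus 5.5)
The maximum-degree bound follows from the exponential moment assumption \eqref{eq:exponential} by a union-type argument. Write $S_N:=\frac1N\sum_{v}e^{a\deg(v)}$, which is $O_{\PP}(1)$. Since every term is nonnegative, we have for each vertex $e^{a\deg(v)}\le N S_N$, hence
\[
 \Delta_N\le \frac{\log N+\log S_N}{a}.
\]
Given $\eta>0$, choose $C$ with $\PP(S_N>C)<\eta$ for all large $N$. Then with probability at least $1-\eta$ we get $\Delta_N\le (\log N+\log C)/a\le (2/a)\log N$ for large $N$. This is exactly $\Delta_N=O_{\PP}(\log N)$.

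For the spectral radius, I would bound $\rho(B_N)$ by a matrix norm rather than by anything spectral. Each row of $B_N$ indexed by $(u,v)$ has exactly $\deg(v)-1$ nonzero entries, all equal to $1$. So the $\ell^\infty$ operator norm gives
\[
 \rho(B_N)\le\|B_N\|_{\infty\to\infty}\le \max(\Delta_N-1,0)\le\Delta_N .
\]
Consequently $\rho(B_N)^p\le\Delta_N^p$ for each fixed $p$. On the event $\Delta_N\le (2/a)\log N$, this is at most $(2/a)^p(\log N)^p$, which is $o(N)$. Combining with the previous paragraph, for every $\delta,\eta>0$ we get $\PP(\rho(B_N)^p/N>\delta)<\eta$ for large $N$. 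This is the claimed convergence in probability to $0$.

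The statement is essentially routine, and neither step is a real obstacle. The only point needing care is that the exponential-moment bound is an $O_{\PP}(1)$ statement about the \emph{empirical} average over a random graph. The tightness must therefore be applied at the level of events, rather than by taking expectations, which might not be controlled. The crude pointwise comparison $e^{a\deg(v)}\le NS_N$ handles this cleanly. If $G_N$ is empty or has no directed edges, then $B_N$ is the empty matrix and $\rho(B_N)=0$, so that case is trivial.
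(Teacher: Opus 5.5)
Your proposal is correct and follows essentially the same route as the paper: you bound $e^{a\Delta_N}$ by $N$ times the empirical exponential average to get $\Delta_N=O_{\PP}(\log N)$, then bound $\rho(B_N)$ by the maximal row sum of $B_N$, which is at most $\Delta_N$. You simply spell out the tightness bookkeeping that the paper leaves implicit.
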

\begin{proof}

By \eqref{eq:exponential}, we
must have $\frac1Ne^{a\Delta_N}=O_{\PP}(1)$, so this gives the first result. Every row sum of $B_N$ is at most $\Delta_N$, so 
$\rho(B_N)\le\Delta_N$. This proves the last statement.
\end{proof}

We next show that for $r$ sufficiently far from the spectrum, the log-determinant must converge to a fixed limit. For simplicity of notation, we now write $H_N(r)\deq H(r,G_N)$.

\begin{lemma}\label{lem:forced}
Fix $k\geq 1$, $1<R<r$ and $\delta>0$. Say $G_N$ is an $N$-vertex graph sampled from our distribution. Define $\sigma(B_N)$ to be the spectrum of $B_N$, and set the event $\Omega_N$ and function $F_0(r)$ as 
\[
 \Omega_N(r,\delta)\deq 
 \{|\lambda_k(B_N)|\le R,\ \min_{\lambda\in \sigma(B_N)}|r-\lambda|\ge\delta\},
 \qquad
 F_0(r)= m \log r-\left(\frac m 2-1\right)\log(r^2-1).
\]
For every $\varepsilon>0$, as  $N\rightarrow \infty$,
\begin{equation}\label{eq:localized}
 \PP\left(\Omega_N(r,\delta)\cap
 \left\{\left|\frac1N\log|\det H_N(r)|-F_0(r)\right|
      >\varepsilon\right\}\right)\longrightarrow0.
\end{equation}
\end{lemma}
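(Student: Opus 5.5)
The plan is to combine the Ihara–Bass identity \eqref{eq:ihara} with a trace expansion of $\log|\det(rI-B_N)|$ over the eigenvalues of modulus at most $R$. Write $E_N$ for the edge set of $G_N$, so $B_N$ has $2|E_N|$ eigenvalues $\lambda_1,\dots,\lambda_{2|E_N|}$ counted with algebraic multiplicity. Since $r>1$, the factor $r^2-1$ is nonzero and \eqref{eq:ihara} gives
\[
 \frac1N\log|\det H_N(r)|=\frac1N\sum_{i=1}^{2|E_N|}\log|r-\lambda_i|-\left(\frac{|E_N|}{N}-1\right)\log(r^2-1).
\]
The function $\deg(v)/2$ is $1$-local, and its second moment is $O_{\PP}(1)$ by \eqref{eq:exponential}. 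Hence \eqref{eq:local} gives $|E_N|/N\to m/2$ in probability. The proof therefore reduces to showing that, on $\Omega_N(r,\delta)$,
\[
 \frac1N\sum_i\log|r-\lambda_i|=\frac{2|E_N|}{N}\log r+o_{\PP}(1).
\]

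On $\Omega_N$, at most $k-1$ eigenvalues (the ``large'' ones) have modulus exceeding $R$. For each large eigenvalue, $\delta\le|r-\lambda_i|\le r+\Delta_N$. By Lemma~\ref{lem:degrees}, $\Delta_N=O_{\PP}(\log N)$, so each such term contributes $O(\log\log N)/N\to0$, and so does its companion $\log r/N$. For the remaining small eigenvalues, $|\lambda_i/r|\le R/r<1$, so we may write
\[
 \log|r-\lambda_i|=\log r-\mathrm{Re}\sum_{p\ge1}\frac{\lambda_i^p}{p\,r^p}.
\]
Fix a cutoff $P$. The tail $p>P$, summed over all small eigenvalues and divided by $N$, is at most $\frac{2|E_N|}{N}\sum_{p>P}(R/r)^p/p$. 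This is $O_{\PP}(1)$ times a quantity that tends to $0$ as $P\to\infty$. For each fixed $p\le P$ we use
\[
 \sum_{\text{small}}\lambda_i^p=\Tr B_N^p-\sum_{\text{large}}\lambda_i^p ,
\]
where the large-eigenvalue sum is bounded by $(k-1)\rho(B_N)^p$. Divided by $N$, that part tends to $0$ by Lemma~\ref{lem:degrees}. It then remains to show $\frac1N\Tr B_N^p\to0$ in probability for each fixed $p$; letting $N\to\infty$ first and then $P\to\infty$ will finish the proof.

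For the trace, note that $\Tr B_N^p=\sum_v f_p(G_N,v)$, where $f_p(G,v)$ is the number of closed non-backtracking walks of length $p$ whose first oriented edge starts at $v$. This is a $p$-local function. On any tree it vanishes, since a closed non-backtracking walk of positive length requires a cycle, so $\bE f_p(T,o)=0$. I expect the main obstacle to be verifying the hypothesis $\frac1N\sum_v f_p(G_N,v)^2=O_{\PP}(1)$ needed to apply \eqref{eq:local}. I would first try to avoid it by truncation, since $f_p\wedge M$ is bounded and $p$-local, so \eqref{eq:local} gives $\frac1N\sum_v(f_p\wedge M)\to0$. Then I would control $\frac1N\sum_v f_p\mathbf 1_{\{f_p>M\}}$ using
\[
 f_p(G,v)\le\sum_{\text{paths }v=v_0,\dots,v_{p}}\ \prod_{j}\deg(v_j)\le\deg(v)\,\Delta_N^{\,p-1}.
\]
If that crude bound is insufficient, I would instead bound $f_p(v)$ by the size of the $p$-ball around $v$, which is itself a local function. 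Its moments would be controlled by a Hölder/Cauchy–Schwarz argument along paths, using that \eqref{eq:exponential} gives uniformly bounded empirical moments of all orders of the degree. Either way the goal is uniform integrability of $f_p(G_N,\cdot)$ under the empirical measure, after which $\frac1N\Tr B_N^p\xrightarrow{\PP}0$. Collecting the terms yields
\[
 \frac1N\log|\det H_N(r)|=m\log r-\left(\frac m2-1\right)\log(r^2-1)+o_{\PP}(1)
\]
on $\Omega_N(r,\delta)$, which is \eqref{eq:localized}.
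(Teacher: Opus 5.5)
Your proposal is correct and follows the paper's proof step for step: Ihara--Bass, isolating at most $k-1$ eigenvalues of modulus above $R$ via $\delta\le|r-\lambda|\le r+\Delta_N$, expanding $\log|1-\lambda/r|$ for the rest with the geometric tail bound, and handling the fixed-$p$ terms through $\rho(B_N)^p/N\to0$ and $\frac1N\Tr B_N^p\to0$. The paper simply asserts this last limit from \eqref{eq:local}, and for the second-moment hypothesis you flag, the number of vertices in the $p$-ball does not dominate $f_p(v)$ (in $K_n$ one has $f_3(v)=(n-1)(n-2)$); instead, $f_p(v)\le (A_{G_N}^p)_{vv}$ and Cauchy--Schwarz give $\frac1N\sum_v f_p(v)^2\le\frac1N\Tr A_{G_N}^{2p}\le\frac1N\sum_v\deg(v)^{2p}=O_{\PP}(1)$, where the last step is the appendix's Schatten--H\"older argument applied to $-D_{G_N}\preceq A_{G_N}\preceq D_{G_N}$, so \eqref{eq:local} applies directly without truncation.
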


\begin{proof}
By \eqref{eq:local}, $|E_N|/N\to m /2>0$ in probability, so we
may discard the event that $B_N$ has fewer than $k$ eigenvalues.
 On $\Omega_N(r,\delta)$, $\det(rI-B_N)$ and, by \eqref{eq:ihara}, $\det H_N(r)$ are nonzero. Considering $\log |1-x|=\operatorname{Re}\log (1-x)$, the power series for logarithms gives the exact identity
\begin{align}\label{eq:trace-log}
 \frac1N\log|\det(rI-B_N)|
 ={}&\frac1N\sum_{\ell=1}^{k-1}\log|r-\lambda_\ell|+\frac{2|E_N|-k+1}{N}\log r\\
 &-\frac1{N}\operatorname{Re}\sum_{p\ge1}
 \frac{\Tr B_N^p-\sum_{\ell=1}^{k-1}\lambda_{\ell}^p}{pr^p}.
\end{align}

For fixed $p$, \Cref{lem:degrees} and \eqref{eq:local} imply that as $N$ increases,
\[
 \frac1{N}\frac{\Tr B_N^p-\sum_{\ell=1}^{k-1}\lambda_{\ell}^p}{pr^p}\xrightarrow{\PP}0.
\]
On the event $|\lambda_k(B_N)|\le R$, the remaining series is uniformly
summable in the following sense:
\begin{equation}\label{eq:series-tail}
 \frac1N\sum_{p>L}\left|\frac{\Tr B_N^p-\sum_{\ell=1}^{k-1}\lambda_{\ell}^p}{pr^p}\right|
 \le\frac{2|E_N|-k+1}{N}\sum_{p>L}\frac{(R/r)^p}{p}.
\end{equation}
The deterministic series tail tends to zero
as $L\to\infty$. Taking $N\to\infty$ for a fixed $L$, then
$L\to\infty$, shows that the probability that
$|\lambda_k(B_N)|\le R$ and the absolute value of the series in
\eqref{eq:trace-log} exceeds any fixed positive tolerance tends to zero.

On $\Omega_N(r,\delta)$, for each $\ell\in [k-1]$,
\[
 \delta\le|r-\lambda_\ell|\le r+\Delta_N.
\]
As $\Delta_N=O_{\PP}(\log N)$,
$N^{-1}\log|r-\lambda_\ell|$ also tends to zero on this event, in precisely
the same probability-of-intersection sense.
Equation \eqref{eq:trace-log} and $2|E_N|/N\to m $ therefore imply
\begin{equation}\label{eq:Bdet-limit}
 \PP\left(\Omega_N(r,\delta)\cap
 \left\{\left|\frac1N\log|\det(rI-B_N)|- m \log r\right|
 >\varepsilon\right\}\right)\longrightarrow0.
\end{equation}
Finally, by \eqref{eq:ihara},
\[
 \frac1N\log|\det H_N(r)|
 =\frac1N\log|\det(rI-B_N)|
 -\left(\frac{|E_N|}{N}-1\right)\log(r^2-1).
\]
Combine this with that $|E_N|/N\to m /2$ in probability to obtain
\eqref{eq:localized}.
\end{proof}

\subsection{The limiting tree operator}
The majority of the remainder of the paper is dedicated towards showing that for $1<r<\sqrt{\kappa}$, $N^{-1}\log|\det H_N(r)|$ must be bounded away from $F_0(r)$.

Consider the tree $(T,o)$ defined above with degree distribution $D$.  If $p(v)$ is the parent of
$v\ne o$, in the tree, we can perform the expansion
\begin{equation}\label{eq:tree-form}
 \langle f,H(r,T)f\rangle
 =(r^2-1)|f(o)|^2+\sum_{v\ne o}|rf(v)-f(p(v))|^2.
\end{equation}
Therefore, for $r>1$, $H(r,T)$ is positive semidefinite. We then define $\nu_r$ to be the expected spectral measure on $H(r,T)$. Specifically, we set $\nu_r$ to be the probability measure supported on $[0,\infty)$ such that for all $\eta>0$,
\be\label{eq:nu}
 \int_{[0,\infty)}\frac{\nu_r(dx)}{x+\eta}
 =\bE\bigl\langle\delta_o,(H(r)+\eta I)^{-1}\delta_o\bigr\rangle.
\ee
The existence of such a measure exists by the spectral theorem, see \cite[Appendix A.2]{aizenman2015random}.

We introduce some properties of this measure.

\begin{lemma}\label{lem:root-bound}
For every $r>1$,
\begin{equation}\label{eq:root-moments}
 \int_{\bR} x\,\nu_r(dx)=(r^2-1)+ m ,
 \qquad
 \int_{\bR}\frac1x\,\nu_r(dx)\le\frac1{r^2-1},
 \qquad
 \int_{\bR}|\log x|\,\nu_r(dx)<\infty.
\end{equation}
In particular, $\nu_r(\{0\})=0$.
\end{lemma}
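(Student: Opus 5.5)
We prove the three statements in \eqref{eq:root-moments} separately, all from the spectral representation \eqref{eq:nu} and the quadratic-form expansion \eqref{eq:tree-form}. Write $\mu_{T}$ for the spectral measure of $H(r,T)$ at $\delta_o$ for a fixed realization of the tree, so that $\nu_r=\bE\mu_T$.

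\emph{First moment.} Since $\delta_o$ lies in the domain of $H(r,T)$, we have
\[
\int x\,\mu_T(dx)=\langle\delta_o,H(r,T)\delta_o\rangle=\deg(o)-1+r^2 .
\]
This follows because $H$ acts on $\delta_o$ as $(D_T-I+r^2I)\delta_o-rA_T\delta_o$, and $A_T\delta_o\perp\delta_o$. Taking expectations and using $\bE\deg(o)=\bE D=m$ gives $(r^2-1)+m$. Nonnegativity of $x$ on the support justifies the exchange.

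\emph{Inverse moment.} By \eqref{eq:tree-form}, $\langle f,H f\rangle\ge (r^2-1)|f(o)|^2$ for every finitely supported $f$. For $\eta>0$, set $g=(H+\eta)^{-1}\delta_o$ (computed first on finite truncations if self-adjointness needs care; the truncated forms satisfy the same inequality). Then
\[
\langle\delta_o,g\rangle=\langle g,(H+\eta)g\rangle\ge (r^2-1)|g(o)|^2=(r^2-1)\langle\delta_o,g\rangle^2 ,
\]
since $g(o)=\langle\delta_o,g\rangle$ is real and positive. Hence $\langle\delta_o,(H+\eta)^{-1}\delta_o\rangle\le 1/(r^2-1)$. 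Averaging and letting $\eta\downarrow0$ by monotone convergence gives $\int x^{-1}\nu_r(dx)\le 1/(r^2-1)$. This finiteness immediately forces $\nu_r(\{0\})=0$.

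\emph{Log moment.} Split $|\log x|$ at $x=1$. On $x\ge1$ use $\log x\le x$ together with the first moment. On $x<1$ use $|\log x|\le 1/x$ together with the second bound. Both pieces are finite.

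\emph{Main obstacle.} The only delicate point is functional-analytic: $H(r,T)$ is unbounded when degrees are unbounded, so we must confirm it is essentially self-adjoint on finitely supported functions, or else fix the Friedrichs extension, which is nonnegative by \eqref{eq:tree-form}. We also need $\delta_o$ to lie in the form domain so that the identities above hold. I would choose the Friedrichs extension of the nonnegative form \eqref{eq:tree-form}; this is consistent with the construction cited from \cite{aizenman2015random}. Since $\delta_o$ is finitely supported, it lies in the operator domain, and the argument above goes through. Alternatively, one can obtain the resolvent bound on the finite balls $B_L(o)$ and pass to the limit by strong resolvent convergence.
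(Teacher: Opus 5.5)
Your proposal is correct and follows the paper's proof. The first moment is the expected root diagonal entry. The inverse moment comes from the lower bound $(r^2-1)|f(o)|^2$ in \eqref{eq:tree-form} applied to the resolvent vector; the paper phrases this as the Cauchy--Schwarz variational formula \eqref{eq:variational}, whose extremizer is exactly your $g=(H+\eta)^{-1}\delta_o$. The log moment is split at $x=1$ just as you do, and your Friedrichs-extension remark only adds functional-analytic care that the paper leaves implicit.
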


\begin{proof}
The first identity is the expected root diagonal entry.
For a positive operator $M\succ0$, we recall the diagonal variational formula. For any vector $f\in L^2(T)$, by Cauchy-Schwarz, 
\[
 |f(o)|^2
 =|\langle M^{-1/2}\delta_o,M^{1/2}f\rangle|^2
 \le\langle\delta_o,M^{-1}\delta_o\rangle\langle f,Mf\rangle,
\]
where equality holds for $f$ being any scalar times $M^{-1}\delta_o$.
Therefore,
\begin{equation}\label{eq:variational}
 \langle\delta_o,M^{-1}\delta_o\rangle
 =\sup_{f\ne0}\frac{|f(o)|^2}{\langle f,Mf\rangle}
 =\left(\inf_{f(o)=1}\langle f,Mf\rangle\right)^{-1},
\end{equation}

Equation \eqref{eq:tree-form} implies that for any $f$,
\[
 \langle f,(H(r)+\eta I)f\rangle\ge((r^2-1)+\eta)|f(o)|^2.
\]
Consequently,
\begin{equation}\label{eq:g-bound}
 \langle\delta_o,(H(r)+\eta I)^{-1}\delta_o\rangle
 \le\frac1{(r^2-1)+\eta}.
\end{equation}
Take expectations and let $\eta\downarrow0$ to prove the inverse-moment
bound. Finally, use $\log x\le x$ for $x\ge1$ and
$-\log x\le x^{-1}$ for $0<x\le1$ to prove the last bound in \eqref{eq:root-moments}.
\end{proof}

\subsection{Convergence of the Bethe-Hessian measure}

Let $\nu_{N,r}$ be the probability measure induced by the empirical spectral measure of $H_N(r)$. The following is  consequence of the proof of \cite[Theorem 2]{bordenave2010resolvent} by setting their parameter $\alpha$ to $\frac1r$, rather than the values $\alpha\in \{0,1\}$ considered in their work. For completeness, we produce a proof in \Cref{sec:otherproofs}.

\begin{lemma}\label{lem:esd}
For every fixed $r>1$,
\begin{equation}\label{eq:esd}
 \nu_{N,r}\xrightarrow{\PP}\nu_r\quad\text{weakly}.
\end{equation}
All fixed moments converge in probability. In particular,
\begin{equation}\label{eq:second-moment}
 \frac1N\Tr [H_N(r)^2]
 \xrightarrow{\PP}\bE [D^2]+(2(r^2-1)+r^2) m +(r^2-1)^2.
\end{equation}
\end{lemma}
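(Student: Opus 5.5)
The plan is the method of moments, combined with the local convergence hypothesis \eqref{eq:local} and the exponential degree bound \eqref{eq:exponential}. It is cleaner to do the moments first and deduce weak convergence from them, rather than go through resolvents.

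\textbf{Step 1: each moment is an average of a local function.} For fixed $p\ge1$, write
\[
 \frac1N\Tr[H_N(r)^p]=\frac1N\sum_{v\in V(G_N)}\langle\delta_v,H_N(r)^p\delta_v\rangle .
\]
The matrix $H_N(r)=D_{G_N}+(r^2-1)I-rA_{G_N}$ has only nearest-neighbour and diagonal entries. So $\langle\delta_v,H_N(r)^p\delta_v\rangle$ is a sum over closed walks of length $p$ with lazy steps allowed, weighted by products of degrees along the walk. It is therefore a $\lceil p/2\rceil$-local function $f_p(G_N,v)$. The same expansion on the tree gives $f_p(T,o)=\langle\delta_o,H(r,T)^p\delta_o\rangle$.

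\textbf{Step 2: verify the integrability hypothesis of \eqref{eq:local}.} The bound
\[
 |f_p(G,v)|\le \bigl(\max_{u\in B_p(v)}\deg u+r^2+r\bigr)^{p}\,|B_p(v)|
\]
is too crude to control directly. Instead I would bound $|f_p(G,v)|^2$ by a sum over walks of products of $(1+\deg)$ along the walk, and use Hölder to reduce to averages of $e^{a\deg}$. The cleanest route is to note that $f_p^2$ is itself local, with
\[
 \frac1N\sum_v|f_p|^2\le C_p\,\frac1N\Tr\bigl[(D_{G_N}+A_{G_N}+c)^{2p}\bigr].
\]
It then suffices to show $\frac1N\Tr[(D+A+c)^{q}]=O_\PP(1)$ for each $q$. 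Expanding along walks and applying AM-GM to move degree weights to single vertices bounds this by $C_q\frac1N\sum_v\sum_{u\sim v}\cdots$. Each step of the walk costs a factor of a degree, which can be charged to the endpoint, giving a bound of the form $C_q\frac1N\sum_v \deg(v)^{q'}$. That average is $O_\PP(1)$ by \eqref{eq:exponential}.

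I expect this step, controlling $\frac1N\sum_v|f_p|^2$ for walk-counting functions through neighbours of neighbours, to be the main obstacle. The charging argument has to be done carefully. One can use $\sum_{u\sim v}g(u)$ summed over $v$ equals $\sum_u \deg(u)g(u)$, iterated. Alternatively, a truncation can handle it: replace $f_p$ by $f_p\mathbf 1\{\max_{B_p(v)}\deg\le M\}$, which is bounded, and control the remainder uniformly in $N$ via the exponential moment and Markov's inequality. Either way, \eqref{eq:local} then gives
\[
 \frac1N\Tr[H_N(r)^p]\xrightarrow{\PP}\bE f_p(T,o)=\int x^p\,\nu_r(dx).
\]
The identity on the right holds by \eqref{eq:nu} and the spectral theorem, since $\bE\deg(o)^{p}<\infty$ makes $\delta_o$ lie in the domain.

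\textbf{Step 3: weak convergence and the explicit second moment.} The limit $\nu_r$ is supported on $[0,\infty)$ and its moments grow like $\bE[(\text{local degree})^p]$. By the exponential tail of $D$ (which passes from \eqref{eq:exponential} to the limit by Fatou), these moments are at most $C^p p!$. So Carleman's condition holds, $\nu_r$ is moment-determinate, and convergence of all moments in probability gives \eqref{eq:esd}, via a subsequence argument to pass from in-probability moments to in-probability weak convergence.

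For \eqref{eq:second-moment}, compute
\[
 \langle\delta_o,H^2\delta_o\rangle=(D+r^2-1)^2+r^2D .
\]
Its expectation is
\[
 \bE D^2+2(r^2-1)m+(r^2-1)^2+r^2m,
\]
which matches the stated limit.
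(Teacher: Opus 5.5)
Your outline has the same skeleton as the paper's proof. Both express the moments of $H_N(r)$ as empirical averages of the local functions $(H_N(r)^p)_{vv}$, check the second-moment hypothesis of \eqref{eq:local} via $|(H^p)_{vv}|^2\le (H^{2p})_{vv}$ together with an $O_\PP(1)$ bound on $\frac1N\Tr$ of a $2p$-th power, then use Carleman determinacy, a subsequence argument, and the same explicit second moment. The real difference is how you get that trace bound.

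\textbf{The paper's route.} It needs no walk counting. From $|\langle f,A_Gf\rangle|\le\langle f,D_Gf\rangle$ it gets $-M\preceq H\preceq M$ with $M=(r+1)D_G+(r^2-1)I$. Schatten--H\"older applied to $H=M^{1/2}QM^{1/2}$, $\|Q\|\le1$, then gives $\Tr|H|^{2p}\le\Tr M^{2p}=\sum_v((r+1)\deg v+r^2-1)^{2p}$, which is a $1$-local quantity. Passing this to the tree by truncation and monotone convergence gives $\int x^{2p}\,\nu_r(dx)\le\bE[((r+1)D+r^2-1)^{2p}]$, so Carleman is immediate. The analogous sandwich $0\preceq D+A+cI\preceq 2D+cI$ would settle your ``main obstacle'' in one line.

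\textbf{Your route.} The combinatorial charging also works, but not by plainly iterating $\sum_v\sum_{u\sim v}g(u)=\sum_u\deg(u)g(u)$. The number of walks arriving at $u$ depends on its neighbours' degrees, not on $\deg u$. What does work is the following:
write each move as $\mathbf 1\{v_{i-1}\sim v_i\}=\deg(v_{i-1})P(v_{i-1},v_i)$ with $P=D^{-1}A$; drop the closing constraint; apply AM--GM to the $q$ resulting factors $\deg+c$; and use $\mathbf 1^{T}P^j\le\max(1,\deg)$ entrywise, which follows by induction on $j$. This gives $\Tr(D+A+cI)^q\le 2^q\sum_u\max(1,\deg u)(\deg u+c)^q$.

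\textbf{Two points need more care than your sketch gives them.}
First, the truncation alternative is not independent of this step. Controlling the remainder $\frac1N\sum_v|f_p|\mathbf 1\{\max_{B_p(v)}\deg>M\}$ again requires a bound on $\frac1N\sum_v|f_p|^2$ or a charging estimate of the same kind.

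Second, for Carleman you should get the moment bound on $\nu_r$ by passing the sharp finite-$N$ bound to the limit, since the moments of $\nu_r$ are limits of $\frac1N\Tr H_N(r)^p$. Do not bound $\langle\delta_o,H^{2p}\delta_o\rangle$ on the tree by a walk count times a power of $\max_{B_p(o)}\deg$. That crude estimate roughly doubles the degree exponent and gives only $\bigl(\int x^{2p}\,\nu_r(dx)\bigr)^{1/(2p)}\lesssim p^2$. This does not verify Carleman's condition in the Hamburger form you need: the $\nu_{N,r}$ live on $\bR$, because $H_N(r)$ need not be positive semidefinite (e.g.\ $d$-regular graphs with $1<r<d-1$).

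With these repairs your argument is a valid, purely combinatorial alternative. The paper's operator inequality is shorter and delivers the finite-$N$ and limiting moment bounds at once.
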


This implies a one-sided bound on the log-determinant.

\begin{lemma}\label{lem:log-upper}
For any fixed $r>1$ and $\epsilon>0$,
\begin{equation}\label{eq:log-upper}
 \PP\left(\frac1N\log|\det H_N(r)|-\int\log x\,\nu_r(dx)>\epsilon\right)\to 0,
\end{equation}
using the convention that $\log0=-\infty$.
\end{lemma}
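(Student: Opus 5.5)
Since $H_N(r)$ is real symmetric, we have $\frac1N\log|\det H_N(r)|=\int\log|x|\,\nu_{N,r}(dx)$. The plan is to split $\log|x|$ into a piece that is bounded above and continuous, to which weak convergence (\Cref{lem:esd}) applies, and a large-$|x|$ piece controlled by the second moment \eqref{eq:second-moment}. The small-$|x|$ part only helps, because $\log|x|$ is very negative there; this is the source of the one-sidedness. Truncating from below does not lose anything, since $\log|x|\le\max(\log|x|,-M)$ pointwise.

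For the truncation, fix $M>0$ and set $\phi_M(x)=\max(\min(\log|x|,M),-M)$. This is bounded and continuous on $\bR$. Pointwise,
\[
\log|x|\le\phi_M(x)+\log^+|x|\,\mathbf 1_{\{|x|>e^M\}}\le\phi_M(x)+\frac{x^2}{e^{M}},
\]
using $\log t\le t$ and $t>e^M$, so that $\log t\le t^2/e^M$ for $t\ge e^M\ge1$. Integrating against $\nu_{N,r}$ gives
\[
\frac1N\log|\det H_N(r)|\le\int\phi_M\,d\nu_{N,r}+e^{-M}\frac1N\Tr[H_N(r)^2].
\]
By \Cref{lem:esd}, $\int\phi_M\,d\nu_{N,r}\to\int\phi_M\,d\nu_r$ in probability, since $\phi_M$ is bounded continuous. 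Also $\frac1N\Tr[H_N(r)^2]$ converges in probability to a finite constant $C_r$ by \eqref{eq:second-moment}. Hence with probability tending to one,
\[
\frac1N\log|\det H_N(r)|\le\int\phi_M\,d\nu_r+e^{-M}(C_r+1)+\epsilon/3.
\]

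Next we let $M\to\infty$. Since $\nu_r$ is supported on $[0,\infty)$ with $\int|\log x|\,\nu_r(dx)<\infty$ by \Cref{lem:root-bound}, and $|\phi_M|\le|\log x|$, dominated convergence gives $\int\phi_M\,d\nu_r\to\int\log x\,\nu_r(dx)$. Choose $M$ large enough that both $|\int\phi_M\,d\nu_r-\int\log x\,d\nu_r|<\epsilon/3$ and $e^{-M}(C_r+1)<\epsilon/3$. This yields \eqref{eq:log-upper}.

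The case $\det H_N(r)=0$ is consistent with the convention $\log0=-\infty$ and only makes the inequality easier. The only genuine input is the moment convergence in \Cref{lem:esd}, which supplies the uniform integrability of $\log^+$. No serious obstacle is expected. The one point needing care is that the lower tail cannot be controlled, which is exactly why the statement is one-sided.
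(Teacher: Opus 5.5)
Your proof is correct and follows essentially the same route as the paper. Both arguments bound $\log|x|$ above by a lower-truncated logarithm, pass to the limit via the weak convergence of \Cref{lem:esd} with the positive tail controlled by the second spectral moment \eqref{eq:second-moment}, and remove the truncation using $\int|\log x|\,\nu_r(dx)<\infty$ from \Cref{lem:root-bound}; the only cosmetic difference is that you clamp above and below with one parameter $M$, whereas the paper uses $\log\max\{|x|,\delta\}$ and handles the upper cutoff inside its convergence step.
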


\begin{proof}
For $\delta>0$, set $f_\delta(x)=\log\max\{|x|,\delta\}$. Weak convergence
and the second-moment bound imply
\begin{equation}\label{eq:truncated-log}
 \int f_\delta\,d\nu_{N,r}\xrightarrow{\PP}\int f_\delta\,d\nu_r.
\end{equation}
Indeed, truncate this continuous function above a large constant and use
\[
 \sup_{|x|>L}\frac{\log|x|}{x^2}\longrightarrow0
 \quad(L\to\infty)
\]
to control its positive tail by the second spectral moment.
For every $N$,
\[
 \frac1N\log|\det H_N(r)|
 =\int\log|x|\,\nu_{N,r}(dx)
 \le\int f_\delta\,d\nu_{N,r}.
\]
\Cref{lem:root-bound} gives
$\int f_\delta\,d\nu_r\to\int\log x\,\nu_r(dx)$ as
$\delta\downarrow0$. Choose $\delta$ so that this integral is smaller
than $\int\log x\,\nu_r(dx)+\epsilon$, and apply \eqref{eq:truncated-log}.
\end{proof}

The chief technical result we are left to prove is a bound on the log-determinant of the limiting spectral measure.

\begin{prop}\label{prop:strict}
For every $1<r<\sqrt\kappa$,
\begin{equation}\label{eq:strict}
 \int_{\bR}\log x\,\nu_r(dx)
 < m \log r-\left(\frac m 2-1\right)\log(r^2-1).
\end{equation}
\end{prop}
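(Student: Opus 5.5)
The plan is to compute $\int\log x\,\nu_r(dx)$ exactly through the cavity (Schur complement) structure of $H(r,T)$, and to recognize $F_0(r)$ as the value the same formula takes for a degenerate, zero-conductance cavity solution.

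\textbf{Step 1: the cavity variables.} For a non-root vertex $u$, let $T_u$ be the subtree of $u$ and its descendants, and set
\[
h_u:=\Bigl(\inf_{f(u)=1}\langle f,H_{T_u}(r)f\rangle\Bigr),
\]
where $H_{T_u}(r)$ is the restriction of $H(r,T)$ to $T_u$. Here the diagonal entry $\deg_T(u)-1+r^2$ is kept, so the missing parent edge acts as a Dirichlet boundary. By \eqref{eq:variational}, $h_u^{-1}$ is the cavity Green function. Schur complementation gives the recursion
\[
h_u=K_u-1+r^2-r^2\sum_{c}h_c^{-1},
\]
with the sum over the children $c$ of $u$, and the analogous formula at the root with $D$ in place of $K_u$.

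\textbf{Step 2: the conductance form.} I would rewrite $h_u$ using \eqref{eq:tree-form}. Substituting $f(v)=r^{-|v|+|u|}\varphi(v)$ turns each edge term into $r^{-2(|v|-|u|-1)}|\varphi(v)-\varphi(p(v))|^2$. This gives
\[
h_u=r^2-1+\text{(finite-tree constant)}-r^2\,\mathcal C_u ,
\]
where $\mathcal C_u\ge 0$ is a normalized effective conductance from $u$ to infinity in $T_u$, with edge weights $r^{-2\,\mathrm{depth}}$. The constant is chosen so that $\mathcal C_u\equiv0$ reproduces the free finite-tree solution. I would check the normalization on depth-$L$ truncations, where conductance is zero in the limit of free boundary. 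For that check I would use that a finite tree has $\det(rI-B)=r^{2|E|}$ because $B$ is nilpotent, and then \eqref{eq:ihara} with $|E|=|V|-1$.

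\textbf{Step 3: a vertex/edge decomposition of the log-determinant.} By unimodularity and the Lyons-type identity, I expect
\[
\int\log x\,\nu_r(dx)=\bE\log h_o^{\rm full}+\frac m2\,\bE\log\bigl(1-r^2h_{u\to v}^{-1}h_{v\to u}^{-1}\bigr),
\]
with the edge expectation taken under the size-biased edge measure. Plugging in the degenerate solution ($\mathcal C\equiv0$) gives exactly $F_0(r)=m\log r-(\tfrac m2-1)\log(r^2-1)$; this is the same computation as the finite-tree determinant. I would then show that this functional is strictly decreasing in the conductances. The first attempt is to differentiate along the interpolation $t\mathcal C$, $t\in[0,1]$, and use the recursion to see that the vertex and edge derivative terms combine into a nonpositive quantity. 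Alternatively, one can compare depth-$L$ truncations with free versus Dirichlet boundaries, where the variational formula \eqref{eq:variational} gives monotonicity directly. Either way, strict inequality follows once $\PP(\mathcal C_o>0)>0$.

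\textbf{Step 4: positive conductance (the main obstacle).} This is where $r<\sqrt\kappa$ enters. Since $\kappa>1$, the Galton–Watson tree survives with positive probability. On survival its branching number is $\kappa>r^2$, so by Lyons' theorem the network with edge weights $r^{-2\,\mathrm{depth}}$ is transient, i.e.\ $\mathcal C_o>0$. Concretely, I would build a unit flow of finite energy, namely the flow proportional to the normalized martingale limit $W_v/\kappa^{|v|}$ of subtree sizes. Its energy is
\[
\sum_n r^{2n}\sum_{|v|=n}(W_v\kappa^{-n})^2,
\]
whose expectation is of order $\sum_n (r^2/\kappa)^n<\infty$, provided $\bE K^2<\infty$. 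The difficulty is that $K$ may lack a second moment, since only an exponential moment of $D$ is assumed. If the second moment fails, I would truncate offspring at a large level $M$, pick $M$ with truncated mean $\kappa_M>r^2$, and use Rayleigh monotonicity to transfer positivity of the conductance back to $T$. Justifying the exchange of expectation and limit in Step 3 is a secondary issue; I would handle it with the integrability from Lemma~\ref{lem:root-bound}.
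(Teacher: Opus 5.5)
Your skeleton is the paper's: cavity variables read as effective conductances, a vertex--edge decomposition of $\int\log x\,\nu_r(dx)$, the zero-conductance solution returning $F_0(r)$, an interpolation signed by size biasing and the recursion, and positivity of the conductance from the flow $\kappa^{-|v|}W_v/W_o$.

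\textbf{The gap: a sign error that breaks Step~3 as written.} The edge term must enter with $-\frac m2$:
\[
\int\log x\,\nu_r(dx)=\bE\log h_o^{\mathrm{full}}-\frac m2\,\bE\log\bigl(1-r^2h_{u\to v}^{-1}h_{v\to u}^{-1}\bigr).
\]
This is \eqref{eq:tree-log} in your notation. A single edge already shows it: $h^{\mathrm{full}}=r^2-1$ at both ends, both cavity values are $r^2$, and $\det H=r^2(r^2-1)=(r^2-1)^2/(1-r^{-2})$. With your $+\frac m2$, the degenerate value is $(1+\frac m2)\log(r^2-1)-m\log r\neq F_0(r)$. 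Moreover, both the vertex and edge terms then increase with the conductances, so the interpolation would run in the wrong direction.

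There are two related slips. First, for $u\neq o$ the diagonal entry is $K_u+r^2$, so the recursion is $h_u=K_u+r^2-r^2\sum_c h_c^{-1}$. Second, $\langle f,H_{T_u}f\rangle=r^2|f(u)|^2+(\text{energy of }\varphi)$ gives $h_u=r^2+\mathcal C_u$, with a plus sign, where $\mathcal C_u$ uses conductance $r^{-2(p-1)}$ on the edge entering depth $p$. Your planned normalization check on finite trees would expose all of these.

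You also only assert the decomposition by analogy with Lyons. The paper proves it (Proposition~\ref{prop:tree-log}) in three steps. It adds $\eta>0$. It then uses size biasing to show that the $\eta$-derivative of the right-hand side is $\bE g_\eta$, because the cavity-derivative terms from the vertex and edge parts cancel. Finally it fixes the constant by letting $\eta\to\infty$.

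\textbf{Your interpolation path works.} Once the sign is fixed, your interpolation differs from the paper's only in the path. You scale conductances, replacing each $X=\mathcal C/(r^2+\mathcal C)$ by $\xi(t)=t\mathcal C/(r^2+t\mathcal C)$; the paper uses $X\mapsto tX$. Write $\psi(t)$ for the corrected functional along your path. Size biasing and the coupling $\mathcal C'=\sum_{j=1}^K X_j$ give
\[
\frac{\psi'(t)}{m}=\bE\left[\frac{\dot\xi}{r^2-1+\xi+\sum_{j=1}^K\xi_j(t)}-\frac{\dot\xi}{r^2-1+\xi+t\,\mathcal C'}\right].
\]
Concavity of $x\mapsto x/(r^2+x)$ gives $\xi_j(t)\ge tX_j$. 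So the integrand is nonpositive, and strictly negative on $\{\mathcal C>0,\ \mathcal C'>0\}$. The paper instead uses $1/(1-Y)\ge1/(1-tY)$, which is convexity of the inverse map $y\mapsto r^2y/(1-y)$; the two routes cost the same effort.

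The sign comes from the fixed-point relation, not from monotonicity of the functional in the conductances. The functional is not monotone in general: along constant $X\equiv x$ its derivative at $x=1$ is $\bE[D/(r^2-1+D)]>0$. So your free-versus-Dirichlet alternative would need a separate argument.

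\textbf{Step~4.} The second-moment worry is moot. \eqref{eq:exponential} together with local convergence gives $\bE e^{aD}<\infty$, hence $\bE K^2=\bE[D(D-1)^2]/m<\infty$, which is what the paper uses. Your truncation-plus-Rayleigh fallback is nevertheless correct and would remove that hypothesis.
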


Before proving \Cref{prop:strict}, we will give the proof of the main theorem assuming it.
\begin{proof}[Proof of \Cref{thm:main}]
It suffices to show that for our fixed $k\geq 1$,
\[
 \PP(|\lambda_k(B_N)|\le R)\longrightarrow0
 \quad\text{for every }1<R<\sqrt\kappa.
\]

Fix $R$ and choose $k$ different numbers $r_1,\ldots r_k$ and error parameter $\delta>0$ such that
\begin{equation}\label{eq:two-radii}
\sqrt\kappa>r_1>r_2>\cdots>r_k>R
 \qquad \delta\deq \frac12\min\left\{\{r_{\ell}-r_{\ell+1}\}_{\ell\in [k-1]},r_k-R\right\}>0.
\end{equation}
A set of $k-1$ numbers cannot be within distance strictly less than $\delta$ of all of $r_1,\ldots,r_k$. Therefore,
\begin{equation}\label{eq:cover}
 \{|\lambda_k(B_N)|\le R\}
 \subset \bigcup_{\ell=1}^{k} \Omega_N(r_\ell,\delta)
\end{equation}
apart from the already negligible event that $B_N$ has fewer than $k$ eigenvalues.

For any $r\in \{r_1,\ldots, r_{k}\}$,
\Cref{prop:strict} gives
\[
 \int_{\bR}\log x\,\nu_{r}(dx)<F_0(r),
\]
meaning we can set error parameter
\[
\epsilon\deq F_0(r)- \int_{\bR}\log x\,\nu_{r}(dx)>0.
\]
\Cref{lem:log-upper} implies
\begin{equation}\label{eq:upper-event}
 \PP\left(\frac1N\log|\det H_N(r)|> \int_{\bR}\log x\,\nu_{r}(dx)+\epsilon/2\right)\longrightarrow0.
\end{equation}
Thus \Cref{lem:forced}, applied with an error tolerance
$\epsilon/2$, implies
\begin{equation}\label{eq:lower-event}
 \PP\left(\Omega_N(r,\delta)\cap
 \left\{\frac1N\log|\det H_N(r)|\le \int_{\bR}\log x\,\nu_{r}(dx)+\epsilon\right\}\right)
 \longrightarrow0.
\end{equation}
The events in \eqref{eq:upper-event} and \eqref{eq:lower-event} cover
$\Omega_N(r,\delta)$, so its probability tends to zero for each $r_\ell$.
Equation \eqref{eq:cover} proves the desired bound.
\end{proof}
The rest of the paper is to prove \Cref{prop:strict}.

\section{The tree logarithmic determinant}
\subsection{Root and cavity variables}
The next goal is to estimate the log-determinant on the tree. To do this, we examine the resolvent on the tree and examine its recursive properties.
We consider the root $o$ with offspring $v$. Define $H_{T_v}^{(o)}$ to be the submatrix of $H_{T}$ on the forward tree rooted at $v$. Note this is the Bethe-Hessian on $T_v$, except it is one greater on the entry corresponding to $v$.

For $\eta>0$, define
\begin{equation}\label{eq:cavity-def}
 h_\eta\deq \langle\delta_v,(H_{T_v}^{(o)}+\eta I)^{-1}\delta_v\rangle,
 \qquad X_\eta\deq1-r^2h_\eta.
\end{equation}
Bounding these will allow us to understand the spectral measure. For $K$ from \eqref{eq:sizebiased-law}, the  descendant subtree has offspring at each vertex with distribution $K$. Taking the Schur complement gives
\begin{equation}\label{eq:cavity-eta}
 h_\eta=\frac1{r^2+\eta+\sum_{i=1}^{K}X_{\eta,i}},
 \qquad
 X_\eta=\frac{\eta+\sum_{i=1}^{K}X_{\eta,i}}
                 {r^2+\eta+\sum_{i=1}^{K}X_{\eta,i}}.
\end{equation}
The child variables are independent copies of $X_\eta$, independent of
$K$. These equalities hold on the descendant tree, or in distribution
when the tree coupling is suppressed. The variational bound \eqref{eq:tree-form} gives
\begin{equation}\label{eq:cavity-bounds}
 0<h_\eta\le\frac1{r^2+\eta},
 \qquad \frac{\eta}{r^2+\eta}\le X_\eta<1.
\end{equation}
At the original root the corresponding identity is instead
\begin{equation}\label{eq:root-g}
 g_\eta:=\langle\delta_o,(H(r)+\eta I)^{-1}\delta_o\rangle
 =\frac1{r^2-1+\eta+\sum_{i=1}^{D}X_{\eta,i}}.
\end{equation}
In particular, the root sum has $D$ terms, while the cavity sum has $K$ terms.

We specify the boundary condition used in \eqref{eq:cavity-eta}.
On the $K$-offspring tree start with $X_{v,0}(\eta)\deq1$, and set
\begin{equation}\label{eq:iteration}
 X_{v,\ell+1}(\eta)
\deq \frac{\eta+\sum_{w:\,p(w)=v}X_{w,\ell}(\eta)}
        {r^2+\eta+\sum_{w:\,p(w)=v}X_{w,\ell}(\eta)}.
\end{equation}

These variables decrease in $\ell$. The value $X=1$ outside the compression means that the resolvent of an omitted child subtree is 0.

As $\eta\downarrow0$, these variables decrease. The decreasing limits in
depth and in $\eta$ commute, so
\begin{equation}\label{eq:zero-cavity}
 X:=\lim_{\eta\downarrow0}X_\eta
 =\lim_{\ell\to\infty}X_{o,\ell}(0),
 \qquad
 X\stackrel d=\frac{\sum_{i=1}^{K}X_i}{r^2+\sum_{i=1}^{K}X_i}.
\end{equation}
This is the maximal solution on $[0,1]$, obtained by decreasing iteration
from $1$, and
\begin{equation}\label{eq:Xbounds}
 0\le X\le\frac K{r^2+K}<1\quad\text{almost surely}.
\end{equation}

\subsection{Size biasing and the logarithmic identity}
The main goal of this subsection is to prove a probabilistic interpretation of the log-determinant of the tree.

The fundamental equality (see, e.g. \cite{kurtz1997conceptual}) is that for integrable $f$, 
\begin{equation}\label{eq:sizebias}
 \bE\left[\sum_{i=1}^{D} f\left(X_i,\sum_{j\ne i}X_j\right)\right]
 = m \,\bE \left[f\left(X,\sum_{j=1}^{K}X_j\right)\right],
\end{equation}
where all variables on the right are independent. Next, we show that we can decompose the logarithmic moment of the spectral measure $\nu_r$ into  contributions from vertices and edges. 

\begin{prop}\label{prop:tree-log}
Let $X$ and $Y$ be independent copies of the maximal solulution of \eqref{eq:zero-cavity}. Then
\begin{equation}\label{eq:tree-log}
 \int_{\bR}\log x\,\nu_r(dx)
 =\bE\log\left(r^2-1+\sum_{i=1}^{D}X_i\right)
 -\frac m 2\bE\log\left(1-\frac{(1-X)(1-Y)}{r^2}\right).
\end{equation}
\end{prop}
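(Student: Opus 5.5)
Here is the plan. I will work at $\eta>0$ first, where everything is positive definite, and let $\eta\downarrow0$ at the end. The route is the standard ``derivative in $\eta$'' one. Define
\[
\Phi(\eta)=\bE\log\Bigl(r^2-1+\eta+\sum_{i=1}^{D}X_{\eta,i}\Bigr)-\frac m2\bE\log\Bigl(1-\frac{(1-X_\eta)(1-Y_\eta)}{r^2+\eta}\Bigr),
\]
with the edge term adjusted so that its variation matches the cavity recursion. A cleaner formulation is in terms of $h_\eta$: one has $1-X_\eta=r^2h_\eta$, and the edge factor is $1-h_{\eta}h'_{\eta}\cdot r^2$, which is exactly the Schur-complement factor for the edge $\{u,v\}$ with off-diagonal entry $-r$. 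The target identity is
\[
\frac{d}{d\eta}\int\log(x+\eta)\,\nu_r(dx)=\bE g_\eta=\frac{d}{d\eta}\Phi(\eta).
\]
Both sides should also tend to $0$ difference as $\eta\to\infty$, since both are $\log\eta+o(1)$ after the $\log\eta$ growth is checked. Integrating from $\eta$ to $\infty$ then gives $\int\log(x+\eta)\,\nu_r(dx)=\Phi(\eta)$ for all $\eta>0$.

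The key computation is the derivative of $\Phi$. I differentiate the vertex term using $\partial_\eta$ of $\sum_i X_{\eta,i}$, and the edge term using the recursion \eqref{eq:cavity-eta}. The cancellation relies on the size-biasing identity \eqref{eq:sizebias}. The sum over the $D$ children of the root, of terms depending on $(X_i,\sum_{j\neq i}X_j)$, becomes $m$ times an expectation over an independent pair $(X,\sum_{j\le K}X_j)$. In that expectation, $r^2+\eta+\sum_{j\le K}X_j=1/h_\eta$ is the cavity of the other endpoint. By unimodularity (equivalently, this symmetry), the derivatives of the $\log(1-r^2h h')$ terms exactly cancel the terms $\partial_\eta X_{\eta,i}$ appearing in the derivative of the vertex term. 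What remains is $\bE[1/(r^2-1+\eta+\sum_{i\le D}X_{\eta,i})]=\bE g_\eta$ by \eqref{eq:root-g}. To justify differentiating under the expectation, I will use the bounds \eqref{eq:cavity-bounds} and the fact that $\partial_\eta h_\eta=-\langle\delta_v,(H+\eta)^{-2}\delta_v\rangle$ is bounded by $(r^2+\eta)^{-2}$.

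The next step is the limit $\eta\downarrow0$. On the left, monotone convergence applies together with \Cref{lem:root-bound}: the integrability of $|\log x|$ and $\nu_r(\{0\})=0$ give $\int\log(x+\eta)\,d\nu_r\to\int\log x\,d\nu_r$. On the right, $X_\eta\downarrow X$ by \eqref{eq:zero-cavity}. The vertex log is bounded below by $\log(r^2-1)$ and above by $\log(r^2+D)$, so dominated convergence applies since $\bE D<\infty$. The edge term $\log(1-(1-X)(1-Y)/r^2)$ lies between $\log(1-1/r^2)$ and $0$, so it is bounded, and dominated convergence applies there as well.

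I expect the main obstacle to be the exact algebraic cancellation in the derivative, together with checking the correct normalization of the edge term, i.e.\ whether it is $r^2$ or $r^2+\eta$. Put differently, the issue is verifying that $\Phi$ is precisely the Bethe free energy whose stationarity equations are \eqref{eq:cavity-eta}. I would first check this on a finite tree, where $\log\det(H+\eta)=\sum_v\log(\text{Schur pivot at }v)$. Rooting at any vertex gives the identity that $\log\det$ equals the sum over vertices of $\log(1/g_v)$ plus the sum over edges of $\log(1-r^2 h_{u\to v}h_{v\to u})$. This is an exact identity on finite trees and can be proved by induction on leaves. Taking expectations at the root under unimodularity then gives the formula directly, which would be an alternative to the derivative route. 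That alternative requires approximating $T$ by finite truncations and using \Cref{lem:esd}-type convergence. If the derivative bookkeeping becomes unwieldy, I would fall back on it.
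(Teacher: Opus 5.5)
Your derivative-in-$\eta$ plan is essentially the paper's proof. It uses the same vertex and edge functional, shows its $\eta$-derivative equals $\bE g_\eta$ via \eqref{eq:sizebias} and the coupling $1-Y_\eta=r^2/(r^2+\eta+\sum_{j\le K}X_{\eta,j})$, fixes the constant because both sides are $\log\eta+o(1)$ as $\eta\to\infty$, and passes to $\eta\downarrow0$ by monotone and dominated convergence, exactly as you describe.

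A few local corrections are needed.

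\textbf{Edge normalization.} This settles the question you flagged. The edge term must be $\log\bigl(1-(1-X_\eta)(1-Y_\eta)/r^2\bigr)=\log(1-r^2h_\eta h'_\eta)$, which is your $h$-form. Your displayed $(r^2+\eta)$ version is wrong. With the $r^2$ normalization, the cancellation is exactly
\[
1/h'_\eta-r^2h_\eta=r^2-1+\eta+X_\eta+\sum_{j\le K}X_{\eta,j},
\]
which is the denominator produced by size-biasing the vertex term. The $(r^2+\eta)$ version instead picks up an extra positive term from $\partial_\eta(r^2+\eta)^{-1}$ and has a mismatched denominator, so it cannot satisfy $\Phi'=\bE g_\eta$.

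\textbf{Derivative bound.} The bound $\langle\delta_v,(H^{(o)}_{T_v}+\eta I)^{-2}\delta_v\rangle\le(r^2+\eta)^{-2}$ is false in general. A vertex with $K$ leaf children gives $r^{-4}(1+K/r^2)$ as $\eta\downarrow0$. Use instead the bound $\eta^{-2}$, which follows from $H^{(o)}_{T_v}\succeq0$; this is what the paper does, and it suffices for differentiating under the expectation on compact subintervals of $(0,\infty)$.

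\textbf{Fallback identity.} In your finite-tree fallback, the edge sum enters with a minus sign, not a plus. The two-vertex case shows this:
\[
\log(ab-r^2)=\log\tfrac{ab-r^2}{a}+\log\tfrac{ab-r^2}{b}-\log\bigl(1-\tfrac{r^2}{ab}\bigr).
\]
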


\begin{proof}
For $\eta>0$, take independent cavity variables $X_\eta,Y_\eta$, coupled
with their derivatives on their respective trees, and set
\begin{equation}\label{eq:Feta}
 F(\eta)\deq \bE\log\left(r^2-1+\eta+\sum_{i=1}^{D}X_{\eta,i}\right)
 -\frac m 2\bE\log\left(1-\frac{(1-X_\eta)(1-Y_\eta)}{r^2}\right).
\end{equation}
The first logarithm is integrable because the internal sum is at most $D$.
The second logarithm is bounded, since its argument lies in $[1-1/r^2,1]$.

Recall $g_\eta$ from \eqref{eq:root-g}. We claim that\begin{equation}\label{eq:Fprime}
 F'(\eta)=\bE g_\eta.
\end{equation}
We write $\dot X_\eta\deq \frac{\partial}{\partial \eta}X_\eta$. Then
\begin{equation}\label{eq:Xderivative}
 0\le\dot X_\eta
 =r^2\langle\delta_v,(H_{T_v}^{(o)}+\eta I)^{-2}\delta_v\rangle
 \le r^2\eta^{-2}.
\end{equation}
Together with $\bE D<\infty$ and the positive lower bounds on the
logarithmic arguments, this justifies differentiating under expectations
on compact subintervals of $(0,\infty)$.
Next we apply \eqref{eq:sizebias} to the derivative of first term in \eqref{eq:Feta} and use symmetry in the edge term. We obtain
\begin{align}\label{eq:cancellation}
 F'(\eta)={}&\bE g_\eta
 + m \,\bE\frac{\dot X_\eta}
 {r^2-1+\eta+X_\eta+\sum_{i=1}^{K}X_{\eta,i}}\notag\\
 &- m \,\bE\frac{\dot X_\eta(1-Y_\eta)}
 {r^2-(1-X_\eta)(1-Y_\eta)}.
\end{align}
In the middle expectation the $K$-sum is independent of
$(X_\eta,\dot X_\eta)$. Couple it with
\[
 Y_\eta=\frac{\eta+\sum_{i=1}^{K}X_{\eta,i}}
                 {r^2+\eta+\sum_{i=1}^{K}X_{\eta,i}}.
\]
Then
\[
 r^2-1+\eta+X_\eta+\sum_{i=1}^{K}X_{\eta,i}
 =\frac{r^2-(1-X_\eta)(1-Y_\eta)}{1-Y_\eta}.
\]
The last two terms of \eqref{eq:cancellation} cancel, proving \eqref{eq:Fprime}.

In comparison,
\[
 \frac{d}{d\eta}\int_{\bR}\log(x+\eta)\,\nu_r(dx)
 =\int_{\bR}\frac1{x+\eta}\,\nu_r(dx)=\bE g_\eta.
\]
As we always have $0\leq g_\eta\leq \frac{1}{r^2-1}$, this means that for some constant $C$ and $\eta\in (0,\infty)$,
\[
\int_\bR \log(x+\eta)\nu_r(\rd x)=F(\eta)+C.
\]
Both equal
$\log\eta+o_\eta(1)$ as $\eta\to\infty$, so $C=0$.

Finally, let $\eta\downarrow0$. \Cref{lem:root-bound} supplies the
finite absolute logarithmic moment on the left of \eqref{eq:tree-log}. On the right, the first logarithm in \eqref{eq:Feta} is bounded between $\log (r^2-1)$ and $\log(r^2-1+1+D)$ for $\eta\le1$,
and the second logarithm is uniformly bounded. Equation
\eqref{eq:zero-cavity} and dominated convergence give \eqref{eq:tree-log}.
\end{proof}

\subsection{A finite-energy flow on the forward tree}
In order to control the spectral measure, our next goal is to show that the variable $X$ is nontrivial.
\begin{lemma}\label{lem:nonzero}
    If $1<r<\sqrt{\kappa}$, the maximal solution of \eqref{eq:zero-cavity} satisfies
\begin{equation}\label{eq:nonzero}
 \PP(X>0)>0.
\end{equation}
\end{lemma}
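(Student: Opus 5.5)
The plan is to interpret the zero-energy cavity variable $X$ as a normalized effective conductance, as suggested in the introduction, and then to show this conductance is positive with positive probability by exhibiting a unit flow of finite energy.

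\textbf{Step 1: $X$ as a conductance.} Write $X=C/(r^2+C)$, where $C_v:=\sum_{w:\,p(w)=v}X_w$. The recursion \eqref{eq:zero-cavity} becomes
\[
X_w=\frac{C_w}{r^2+C_w}=\Bigl(\tfrac{1}{C_w}+\tfrac{1}{r^2}\Bigr)^{-1}\cdot\frac{1}{r^2}\cdot r^2 .
\]
This is exactly the series–parallel rule. The children's conductances $X_w$ are added in parallel at $v$. The result is then put in series with a unit-conductance edge, after rescaling by $r^2$ per generation.

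To make this precise, give the edge joining a vertex at depth $n-1$ to one at depth $n$ the conductance $c_n=r^{-2n}$. Let $\mathcal C_\ell(v)$ be the effective conductance from $v$ to the level $\ell$ generations below $v$, with all vertices on that level grounded. Then I expect $X_{v,\ell}(0)=r^{2\,\mathrm{depth}(v)}\cdot(\text{series–parallel value})$. The boundary value $X=1$ corresponds to a short-circuited leaf level, and induction on $\ell$ checks the identity, with the constants still to be fixed.

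Letting $\ell\to\infty$ (a decreasing limit, matching \eqref{eq:zero-cavity}), $X_o$ becomes the conductance from $o$ to infinity of the forward tree $T_o$ with these geometric conductances. Here the root edge carries conductance $1$ after the normalization $r^2$. Hence $X>0$ if and only if this network is transient.

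\textbf{Step 2: a finite-energy unit flow.} By Thomson's principle, a positive conductance to infinity follows once we have a unit flow from $o$ to infinity of finite energy $\sum_e \theta(e)^2/c_e$.

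On the event of survival of the Galton–Watson tree with offspring $K$, which has positive probability since $\kappa=\bE K>1$, use the flow induced by the martingale limit. Let $Z_n$ be the size of generation $n$ and $W_v=\lim_n Z_n(v)/\kappa^n$ for the subtree at $v$. Set $\theta(v)=\kappa^{-n}W_v/W_o$ for $v$ at depth $n$. This is a unit flow because $W_v=\kappa^{-1}\sum_{w}W_w$.

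The energy is
\[
\sum_n r^{2n}\kappa^{-2n}W_o^{-2}\sum_{|v|=n}W_v^2 .
\]
Its expectation, conditioned suitably, behaves like $\sum_n (r^2/\kappa)^n\,\bE W^2$, which converges since $r^2<\kappa$.

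\textbf{Step 3: the main obstacle, moments.} The difficulty is that $\bE W^2<\infty$ requires $\bE K^2<\infty$, and the Kesten–Stigum condition requires $\bE K\log K<\infty$; neither is assumed in Proposition~\ref{prop:strict}. I would avoid both by truncation. Replace $K$ by $K\wedge M$ with $M$ chosen so that $\bE[K\wedge M]>r^2$, which is possible because $r^2<\kappa$.

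Monotonicity of effective conductance under deleting subtrees, which is Rayleigh monotonicity and matches the monotone decreasing iteration from $1$, shows that the conductance of the full tree dominates that of a pruned subtree. So it suffices to treat bounded offspring, where $W$ has all moments, $W>0$ on survival, and the energy has finite expectation on the event $\{W_o>\epsilon\}$, which has positive probability.

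Alternatively, to avoid dividing by $W_o$, I could instead use the branching-number criterion: Lyons' theorem states that a Galton–Watson tree with mean $\bE[K\wedge M]>r^2$ is transient for conductances $r^{-2n}$ almost surely on survival. This gives $\PP(X>0)\ge\PP(\text{survival})>0$.
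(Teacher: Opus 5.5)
Your proposal is correct and is essentially the paper's proof. It uses the same identification of $X_{v,\ell}(0)$ with a normalized effective conductance under geometric edge weights, and the same unit flow $\theta(v)=\kappa^{-|v|}W_v/W_o$ built from the martingale limits. The energy bound is also the same, via $\bE\sum_{|v|=p}W_v^2=\kappa^p\,\bE W_o^2$ and $r^2<\kappa$, followed by Thomson's principle.

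The differences are minor. Your truncation to $K\wedge M$ is unnecessary in the paper's setting, where $\Var K<\infty$ follows from the exponential moment of $D$. It is nonetheless a valid generalization via Rayleigh monotonicity, as is your alternative appeal to Lyons' branching-number theorem. Your first display has a stray factor of $r^2$: it should read $C_w/(r^2+C_w)=(1+r^2/C_w)^{-1}$. The normalization $c_n=r^{-2n}$ you adopt afterwards is correct.
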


\begin{proof}
Throughout this proof, we consider the cavity tree with root $o$, which is a random tree where each vertex has offspring law $K$.  Let $\sigma^2=\Var K<\infty$, which follows from the exponential
moment of $D$. For a vertex $v$, let $Z_{v,\ell}$ be its number of
descendants at distance $\ell$. The normalized generation sizes
$\kappa^{-\ell}Z_{v,\ell}$ are nonnegative martingales. By \cite
[Theorem~8.1]{harris1963theory}, the martingales are bounded in $L^2$, and their limits $W_v$ satisfy
\begin{equation}\label{eq:W}
 \bE W_o=1,\qquad
 \bE W_o^2=1+\frac{\sigma^2}{\kappa(\kappa-1)},\qquad
 W_v=\frac1\kappa\sum_{w:\,p(w)=v}W_w.
\end{equation}
The branching identity holds simultaneously at all vertices almost surely.
In particular, $\PP(W_o>0)>0$.

We now want to say that on the event that $W_o>0$, with nonzero probability $X>0$ as well. To do this, we will use the often useful electrical network interpretation of the graph. Through the proof, we will use a set of standard facts about these networks. For an overview, see \cite[Chapters II and IX]{bollobas1998modern}.

We will interpret the cavity iteration as an effective conductance, so that
we can prove its positivity by constructing a flow of finite energy.  For a vertex $v$, let $C_{v,\ell}$ be the effective conductance from $v$
to the set of its descendants at distance $\ell$, with those descendants wired
together. In defining this network, assign resistance $r^{2(p-1)}$ to
each edge entering depth $p$ {relative to $v$}. A branch that ends
before reaching the terminal generation carries no current and
contributes zero conductance. Set $C_{v,0}=\infty$, corresponding to
zero resistance between a vertex and itself.

Consider the branch from $v$ through a child $w$. The edge $(v,w)$ has
resistance $1$. Beyond this edge, every resistance is $r^2$ times the
resistance used in the definition of $C_{w,\ell-1}$. Thus the resistance
of the network beyond $w$ is $r^2/C_{w,\ell-1}$. Adding resistances in
series, the conductance of the entire branch through $w$ is
\[
 \left(1+\frac{r^2}{C_{w,\ell-1}}\right)^{-1}
 =\frac{C_{w,\ell-1}}{r^2+C_{w,\ell-1}},
\]
with value $0$ when $C_{w,\ell-1}=0$ and value $1$ when
$C_{w,\ell-1}=\infty$.
The branches through different children are in parallel, so their
conductances add. Comparing this recursion with the cavity iteration \eqref{eq:iteration},
and using the common initial value $X_{v,0}(0)=1$, gives by induction
\begin{equation}\label{eq:conductance}
 \begin{aligned}
 C_{v,\ell}
 &=\sum_{w:\,p(w)=v}
   \frac{C_{w,\ell-1}}{r^2+C_{w,\ell-1}}
   =\sum_{w:\,p(w)=v}X_{w,\ell-1}(0),\\
 X_{v,\ell}(0)&=\frac{C_{v,\ell}}{r^2+C_{v,\ell}}.
 \end{aligned}
\end{equation}
Write $C_\ell=C_{o,\ell}$. It remains to show that, with positive probability, $C_\ell$ stays
bounded away from zero as $\ell\to\infty$.

Set $|v|$ to be the distance from the root $o$ to $v$. On $\{W_o>0\}$, assign current
\[
 \theta(v)=\frac{\kappa^{-|v|}W_v}{W_o}
\]
to the edge entering $v$. The branching identity in
\eqref{eq:W} gives
\[
 \sum_{w:\,p(w)=v}\theta(w)
 =\frac{\kappa^{-|v|-1}}{W_o}
   \sum_{w:\,p(w)=v}W_w
 =\frac{\kappa^{-|v|}W_v}{W_o}=\theta(v).
\]
Thus, one unit of current leaves the root, and current is conserved at
every other vertex. This defines a unit flow to infinity.

By \cite[Equation IX.5]{bollobas1998modern}, the total energy of this flow is
\begin{equation}\label{eq:energy}
 \mathcal E
 :=\sum_{v\ne o}r^{2(|v|-1)}\theta(v)^2
 =\frac1{W_o^2}
   \sum_{p\ge1}r^{2(p-1)}\kappa^{-2p}
                   \sum_{|v|=p}W_v^2.
\end{equation}
We want to prove that $\mathcal E$ is finite on $\{W_o>0\}$.
Conditional on the tree exposed through depth $p$, the descendant
trees rooted at that depth are independent copies of the original
forward tree. Hence
\[
 \bE\sum_{|v|=p}W_v^2
 =\kappa^p\bE W_o^2.
\]
Using Tonelli's theorem and $r^2<\kappa$, we obtain
\begin{align}\label{eq:expected-energy}
 \bE\sum_{p\ge1}r^{2(p-1)}\kappa^{-2p}
                       \sum_{|v|=p}W_v^2
 &=\bE W_o^2\sum_{p\ge1}r^{2(p-1)}\kappa^{-p}\notag\\
 &=\frac{\bE W_o^2}{\kappa}
   \sum_{j\ge0}\left(\frac{r^2}{\kappa}\right)^j
 =\frac{\bE W_o^2}{\kappa-r^2}<\infty.
\end{align}
Therefore, on $\{W_o>0\}$,
$\mathcal E<\infty$ almost surely.

On $\{W_o>0\}$, restrict the unit flow $\theta$ to the first
$\ell$ generations. This gives a unit flow from $o$ to the wired
$\ell$th generation. By Thomson's principle
\cite[Theorem IX.2]{bollobas1998modern}, the effective resistance
is at most the energy of this flow:
\[
 \frac1{C_\ell}
 \le \sum_{1\le |v|\le\ell}
       r^{2(|v|-1)}\theta(v)^2
 \le \mathcal E.
\]
Consequently, \eqref{eq:conductance} gives
\[
 X_{o,\ell}(0)
 =\frac{C_\ell}{r^2+C_\ell}
 \ge \frac1{1+r^2\mathcal E}
 \qquad\text{for every }\ell\ge1.
\]
Since $\mathcal E<\infty$ almost surely on $\{W_o>0\}$,
letting $\ell\to\infty$ in \eqref{eq:zero-cavity} yields
\[
 X\ge\frac1{1+r^2\mathcal E}>0
 \qquad\text{almost surely on }\{W_o>0\}.
\]
As $\PP(W_o>0)>0$, this proves \eqref{eq:nonzero}.
\end{proof}

\subsection{The interpolation}

At this point, we are ready to prove our explicit bound. We do so through an interpolation argument.
\begin{proof}[Proof of \Cref{prop:strict}]
Let $X$ and $Y$ be independent copies sampled according to the law in \eqref{eq:zero-cavity}.
For $0\le t\le1$, define
\begin{equation}\label{eq:Phi}
 \Phi(t)=\bE\log\left(r^2-1+t\sum_{i=1}^{D}X_i\right)
 -\frac m 2\bE\log\left(\frac{r^2-(1-tX)(1-tY)}{r^2}\right).
\end{equation}
Its endpoints are
\begin{align}
 \Phi(0)&=\frac m 2\log r^2+\left(1-\frac m 2\right)\log (r^2-1)
 = m \log r-\left(\frac m 2-1\right)\log(r^2-1),
 \label{eq:Phi0}\\
 \Phi(1)&=\int_{\bR}\log x\,\nu_r(dx),
 \label{eq:Phi1}
\end{align}
where \eqref{eq:Phi1} is \Cref{prop:tree-log}.

We compute the sign of the derivative $\Phi'(t)$. The derivative of the first term in \eqref{eq:Phi} is bounded
by $D/(r^2-1)$, an integrable variable. Also
\[
 r^2-1\le r^2-(1-tX)(1-tY)\le r^2+1,
\]
so the edge derivative is uniformly bounded. These estimates justify
differentiation under expectations and continuity on $[0,1]$.
\eqref{eq:sizebias} gives
\[
 \bE\frac{\sum_{i=1}^{D}X_i}{r^2-1+t\sum_{i=1}^{D}X_i}
 = m \,\bE\frac{X}{r^2-1+tX+t\sum_{i=1}^{K}X_i}.
\]
On the right the $K$-sum is independent of $X$. By the cavity recursion,
it has the same law as $r^2Y/(1-Y)$. Symmetry in $X,Y$ combines the two edge
derivatives, giving
\begin{equation}\label{eq:Phi-prime-first}
 \frac{\Phi'(t)} m 
 =\bE\left[
 \frac{X}{r^2-1+tX+tr^2Y/(1-Y)}
 -\frac{X(1-tY)}{r^2-1+t(X+Y)-t^2XY}
 \right].
\end{equation}
We can write
\[
 \frac{r^2-1+t(X+Y)-t^2XY}{1-tY}
 =r^2-1+tX+\frac{tr^2Y}{1-tY}.
\]
Consequently,
\begin{equation}\label{eq:Phi-prime}
 \frac{\Phi'(t)} m 
 =\bE\left[
 \frac{X}{r^2-1+tX+tr^2Y/(1-Y)}
 -\frac{X}{r^2-1+tX+tr^2Y/(1-tY)}
 \right].
\end{equation}
For $0<t<1$, the first denominator is at least the second, and is strictly
larger if $Y>0$. The integrand is therefore nonpositive and is strictly
negative on $\{X>0,Y>0\}$. Since $r^2<\kappa$, \Cref{lem:nonzero}
and independence give this event positive probability. Thus
$\Phi'(t)<0$ throughout $(0,1)$, proving $\Phi(1)<\Phi(0)$ and hence
\eqref{eq:strict}.
\end{proof}

\appendix
\section{Deferred proof}\label{sec:otherproofs}
\begin{proof}[Proof of \Cref{lem:esd}]
For any finite graph put $H=D_G-rA_G+(r^2-1)I$ and
$M=(r+1)D_G+(r^2-1)I$. The inequality
$|\langle f,A_Gf\rangle|\le\langle f,D_Gf\rangle$ gives
$-M\preceq H\preceq M$. Therefore, we can write $H=M^{1/2}QM^{1/2}$ for a self-adjoint
contraction $-I\preceq Q\preceq I$. Schatten H\"older gives
\begin{equation}\label{eq:schatten}
 \Tr[|H|^{2p}]\le\Tr [M^{2p}],\qquad p\ge1.
\end{equation}
Thus
\begin{equation}\label{eq:matrix-moment-bound}
 \frac1N\Tr|H_N(r)|^{2p}
 \le\frac1N\sum_v\bigl((r+1)\deg(v)+r^2-1\bigr)^{2p}
 \xrightarrow{\PP}\bE\bigl[(r+1)D+r^2-1\bigr]^{2p},
\end{equation}
where the convergence applies to the expression on the right of the
inequality, by \eqref{eq:local}.

For fixed $p\ge1$, the quantity $(H_N(r)^p)_{vv}$ is a $p$-local
function. We first check its second moments on the limiting tree.
Apply \eqref{eq:local} to bounded truncations of the nonnegative
local function $(H_N(r)^{2p})_{vv}$. By
\eqref{eq:matrix-moment-bound}, their empirical averages are bounded
above by a quantity converging in probability to
$\bE\left[((r+1)D+r^2-1)^{2p}\right]$. Passing to the limit and then removing
the truncation by monotone convergence gives
\begin{equation}\label{eq:tree-moment-bound}
 \bE\left[\langle\delta_o,H^{2p}\delta_o\rangle\right]
 \le \bE\bigl[((r+1)D+r^2-1)^{2p}\bigr]<\infty.
\end{equation}

Cauchy-Schwarz and \eqref{eq:matrix-moment-bound} give
\[
 \frac1N\sum_v |(H_N(r)^p)_{vv}|^2
 \le \frac1N\Tr \left[H_N(r)^{2p}\right]=O_{\PP}(1).
\]
Similarly, \eqref{eq:tree-moment-bound} gives
\[
 \bE\left[\bigl|\langle\delta_o,H^p\delta_o\rangle\bigr|^2\right]
 \le \bE\left[\langle\delta_o,H^{2p}\delta_o\rangle\right]<\infty.
\]
Thus the local function $(H_N(r)^p)_{vv}$ satisfies the
second-moment conditions in \eqref{eq:local}. Applying that
assumption directly yields
\begin{equation}\label{eq:moment-convergence}
 \frac1N\Tr \left[H_N(r)^p\right]
 \xrightarrow{\PP}
 \bE\left[\langle\delta_o,H^p\delta_o\rangle\right]
 =\int x^p\,\nu_r(dx)
 \qquad\text{for every fixed }p\ge1.
\end{equation}

To check moment determinacy, set $b=a/(r+1)>0$. The elementary bound
$y^{2p}\le(2p)!b^{-2p}e^{by}$ for $y\ge0$ and
\eqref{eq:local} give
\begin{equation}\label{eq:carleman-bound}
 \int x^{2p}\,\nu_r(dx)
 \le(2p)!b^{-2p}e^{b(r^2-1)}\bE [e^{aD}].
\end{equation}
As the exponential moment is bounded, Carleman's condition follows. From every subsequence, we can extract a
further subsequence on which all the moments in
\eqref{eq:moment-convergence} converge almost surely. The even moments
give tightness and allow moments to pass to every weak limit. Moment
determinacy identifies that limit with $\nu_r$, proving \eqref{eq:esd}.

Finally,
\[
 \frac1N\Tr \left[H_N(r)^2\right]
 =\frac1N\sum_v(\deg(v)+r^2-1)^2+\frac {r^2}N\sum_v\deg(v),
\]
and \eqref{eq:local} proves \eqref{eq:second-moment}.
\end{proof}
\bibliographystyle{amsplain}
\bibliography{ref}

@book{harris1963theory,
  title={The theory of branching processes},
  author={Harris, Theodore Edward},
  volume={6},
  year={1963},
  publisher={Springer Berlin}
}

@incollection {kurtz1997conceptual,
    AUTHOR = {Kurtz, Thomas and Lyons, Russell and Pemantle, Robin and
              Peres, Yuval},
     TITLE = {A conceptual proof of the {K}esten-{S}tigum theorem for
              multi-type branching processes},
 BOOKTITLE = {Classical and modern branching processes ({M}inneapolis, {MN},
              1994)},
    SERIES = {IMA Vol. Math. Appl.},
    VOLUME = {84},
     PAGES = {181--185},
 PUBLISHER = {Springer, New York},
      YEAR = {1997},
      ISBN = {0-387-94872-4},
   MRCLASS = {60J80},
  MRNUMBER = {1601737},
       DOI = {10.1007/978-1-4612-1862-3\_14},
       URL = {https://doi.org/10.1007/978-1-4612-1862-3_14},
}

@book{bollobas1998modern,
  title={Modern graph theory},
  author={Bollob{\'a}s, B{\'e}la},
  volume={184},
  year={1998},
  publisher={Springer Science \& Business Media}
}

@article{alon1986eigenvalues,
  title = {Eigenvalues and expanders},
  author = {Alon, Noga},
  journal = {Combinatorica},
  volume = {6},
  number = {2},
  pages = {83--96},
  year = {1986},
  doi = {10.1007/BF02579166}
}

@article{nilli1991second,
  title = {On the second eigenvalue of a graph},
  author = {Nilli, A.},
  journal = {Discrete Mathematics},
  volume = {91},
  number = {2},
  pages = {207--210},
  year = {1991},
  doi = {10.1016/0012-365X(91)90112-F}
}

@article{hoory2005lower,
  title = {A lower bound on the spectral radius of the universal cover of a graph},
  author = {Hoory, Shlomo},
  journal = {Journal of Combinatorial Theory, Series B},
  volume = {93},
  number = {1},
  pages = {33--43},
  year = {2005},
  doi = {10.1016/j.jctb.2004.06.001}
}

@article{jiang2019spectral,
  title = {On spectral radii of unraveled balls},
  author = {Jiang, Zilin},
  journal = {Journal of Combinatorial Theory, Series B},
  volume = {136},
  pages = {72--80},
  year = {2019},
  doi = {10.1016/j.jctb.2018.09.003}
}

@incollection{hashimoto1989zeta,
  title = {Zeta functions of finite graphs and representations of {$p$}-adic groups},
  author = {Hashimoto, Ki-ichiro},
  booktitle = {Automorphic Forms and Geometry of Arithmetic Varieties},
  series = {Advanced Studies in Pure Mathematics},
  volume = {15},
  pages = {211--280},
  publisher = {Academic Press},
  year = {1989},
  doi = {10.2969/aspm/01510211}
}

@article{bass1992ihara,
  title = {The {Ihara--Selberg} zeta function of a tree lattice},
  author = {Bass, Hyman},
  journal = {International Journal of Mathematics},
  volume = {3},
  number = {6},
  pages = {717--797},
  year = {1992},
  doi = {10.1142/S0129167X92000357}
}

@article{banks2024useful,
  title={A useful formula for periodic Jacobi matrices on trees},
  author={Banks, Jess and Breuer, Jonathan and Garza-Vargas, Jorge and Seelig, Eyal and Simon, Barry},
  journal={Proceedings of the National Academy of Sciences},
  volume={121},
  number={23},
  pages={e2315218121},
  year={2024},
  publisher={National Academy of Sciences}
}

@article{anantharaman2017some,
  title={Some relations between the spectra of simple and non-backtracking random walks},
  author={Anantharaman, Nalini},
  journal={arXiv preprint arXiv:1703.03852},
  year={2017}
}

@article{sodin2007random,
  title = {Random matrices, nonbacktracking walks, and orthogonal polynomials},
  author = {Sodin, Sasha},
  journal = {Journal of Mathematical Physics},
  volume = {48},
  number = {12},
  pages = {123503},
  year = {2007},
  doi = {10.1063/1.2819599}
}

@article{krzakala2013spectral,
  title = {Spectral redemption in clustering sparse networks},
  author = {Krzakala, Florent and Moore, Cristopher and Mossel, Elchanan and Neeman, Joe and Sly, Allan and Zdeborov{\'a}, Lenka and Zhang, Pan},
  journal = {Proceedings of the National Academy of Sciences},
  volume = {110},
  number = {52},
  pages = {20935--20940},
  year = {2013},
  doi = {10.1073/pnas.1312486110}
}

@inproceedings{saade2014spectral,
  title = {Spectral clustering of graphs with the {Bethe Hessian}},
  author = {Saade, Alaa and Krzakala, Florent and Zdeborov{\'a}, Lenka},
  booktitle = {Advances in Neural Information Processing Systems},
  volume = {27},
  pages = {406--414},
  year = {2014},
  url = {https://arxiv.org/abs/1406.1880}
}

@article{angel2015non,
  title = {The non-backtracking spectrum of the universal cover of a graph},
  author = {Angel, Omer and Friedman, Joel and Hoory, Shlomo},
  journal = {Transactions of the American Mathematical Society},
  volume = {367},
  number = {6},
  pages = {4287--4318},
  year = {2015},
  doi = {10.1090/S0002-9947-2014-06255-7}
}

@article{bordenave2020new,
  title = {A new proof of {Friedman}'s second eigenvalue theorem and its extension to random lifts},
  author = {Bordenave, Charles},
  journal = {Annales scientifiques de l'{\'E}cole normale sup{\'e}rieure},
  series = {4},
  volume = {53},
  number = {6},
  pages = {1393--1439},
  year = {2020},
  doi = {10.24033/asens.2450}
}

@article{lubetzky2016cutoff,
  title={Cutoff on all Ramanujan graphs},
  author={Lubetzky, Eyal and Peres, Yuval},
  journal={Geometric and Functional Analysis},
  volume={26},
  number={4},
  pages={1190--1216},
  year={2016},
  publisher={Springer}
}

@article {alon2007non,
    AUTHOR = {Alon, Noga and Benjamini, Itai and Lubetzky, Eyal and Sodin,
              Sasha},
     TITLE = {Non-backtracking random walks mix faster},
   JOURNAL = {Commun. Contemp. Math.},
  FJOURNAL = {Communications in Contemporary Mathematics},
    VOLUME = {9},
      YEAR = {2007},
    NUMBER = {4},
     PAGES = {585--603},
      ISSN = {0219-1997,1793-6683},
   MRCLASS = {60C05 (05C38 60G50)},
  MRNUMBER = {2348845},
MRREVIEWER = {Stanislav\ Volkov},
       DOI = {10.1142/S0219199707002551},
       URL = {https://doi.org/10.1142/S0219199707002551},
}

@article{Keller2017criticality,
  author  = {Keller, Matthias and Pinchover, Yehuda and Pogorzelski, Felix},
  title   = {Criticality theory for {Schr{\"o}dinger} operators on graphs},
  journal = {Journal of Spectral Theory},
  volume  = {10},
  number  = {1},
  pages   = {73--114},
  year    = {2020},
  doi     = {10.4171/JST/286}
}

@article{Lyons2010identities,
  author  = {Lyons, Russell},
  title   = {Identities and inequalities for tree entropy},
  journal = {Combinatorics, Probability and Computing},
  volume  = {19},
  number  = {2},
  pages   = {303--313},
  year    = {2010},
  doi     = {10.1017/S0963548309990605}
}

@article{aizenman2015random,
  title={Random operators},
  author={Aizenman, Michael and Warzel, Simone},
  journal={Graduate Studies in Mathematics},
  volume={168},
  pages={xiv+--326},
  year={2015},
  publisher={American Mathematical Society Providence, RI}
}

@article {bordenave2018nonbacktracking,
    AUTHOR = {Bordenave, Charles and Lelarge, Marc and Massouli\'e, Laurent},
     TITLE = {Nonbacktracking spectrum of random graphs: community detection
              and nonregular {R}amanujan graphs},
   JOURNAL = {Ann. Probab.},
  FJOURNAL = {The Annals of Probability},
    VOLUME = {46},
      YEAR = {2018},
    NUMBER = {1},
     PAGES = {1--71},
      ISSN = {0091-1798,2168-894X},
   MRCLASS = {05C80 (60B20 60J85 62M15)},
  MRNUMBER = {3758726},
MRREVIEWER = {D.\ Yogeshwaran},
       DOI = {10.1214/16-AOP1142},
       URL = {https://doi.org/10.1214/16-AOP1142},
}

@article{bordenave2010resolvent,
  title={Resolvent of large random graphs},
  author={Bordenave, Charles and Lelarge, Marc},
  journal={Random Structures \& Algorithms},
  volume={37},
  number={3},
  pages={332--352},
  year={2010},
  publisher={Wiley Online Library}
}

\end{document}